\def\latex/{{\protect\LaTeX}}
\def\latexe/{{\protect\LaTeXe}}
\def\amslatex/{{\protect\AmS-\protect\LaTeX}}
\def\tex/{{\protect\TeX}}
\def\amstex/{{\protect\AmS-\protect\TeX}}
\def\bibtex/{{Bib\protect\TeX}}
\def\makeindx/{\textit{MakeIndex}}
\theoremstyle{plain} 
\newtheorem{thm}{Theorem}[section]
\newtheorem{lem}[thm]{Lemma}
\newtheorem{prop}[thm]{Proposition}
\newtheorem{cor}[thm]{Corollary}
\theoremstyle{definition}
\newtheorem{dfn}[thm]{Definition}
\newtheorem{eg}[thm]{Example}
\newtheorem{rmk}[thm]{Remark}
\newcommand{\ZZ}{\mathbb{Z}}
\newcommand{\QQ}{\mathbb{Q}}
\newcommand{\tensor}{\otimes}
\DeclareMathOperator{\id}{id}
 \DeclareMathOperator{\Tor}{Tor}
\DeclareMathOperator{\Ext}{Ext}
\DeclareMathOperator{\Hom}{Hom}
\DeclareMathOperator{\CI}{\textnormal{CI-dim}}
\DeclareMathOperator{\f}{f^{R}_{ext}}
\DeclareMathOperator{\len}{\lambda}
 \DeclareMathOperator{\pd}{pd}
 \DeclareMathOperator{\cx}{cx}
 \DeclareMathOperator{\px}{px}
 \DeclareMathOperator{\depth}{depth}
 \DeclareMathOperator{\Cl}{Cl}
 \DeclareMathOperator{\tr}{tor}
 \DeclareMathOperator{\h}{h}
\newcommand{\Ann}{\textup{Ann}}
\def\urltilda{\kern -.15em\lower .7ex\hbox{\~{}}\kern .04em}\def\urldot{\kern -.10em.\kern -.10em}\def\urlhttp{http\kern -.10em\lower -.1ex\hbox{:}\kern -.12em\lower 0ex\hbox{/}\kern -.18em\lower 0ex\hbox{/}}
\begin{document}

\title[Asymptotic Behavior of Ext]{Asymptotic Behavior of Ext functors for modules of finite complete intersection dimension}
\author{Olgur Celikbas and Hailong Dao}

\address{Department of Mathematics, University of Nebraska--Lincoln, 311 Avery Hall,
Lincoln, NE 68588--0323, USA}
\email{s-ocelikb1@math.unl.edu}

\address{Department of Mathematics, University of Kansas, 405 Snow Hall, 1460 Jayhawk
Blvd, Lawrence, KS 66045-7523, USA}
\email{hdao@math.ku.edu}

\thanks{The second author is partially supported by NSF grant DMS 0834050}

\subjclass[2000]{13D03}

\keywords{Complete intersection dimension, Herbrand difference, vanishing of cohomology, Grothendieck group}

\begin{abstract}
Let $R$ be a local ring, and let $M$ and $N$ be finitely generated $R$-modules such that $M$ has finite complete intersection dimension. In this paper we define and study, under certain conditions, a pairing using the modules $\Ext_R^i(M,N)$ which generalizes Buchweitz's notion of the Herbrand diference. We exploit this pairing to examine the number of consecutive vanishing of $\Ext_R^i(M,N)$ needed to ensure that $\Ext_R^i(M,N)=0$ for all $i\gg 0$. Our results recover and improve on most of the known bounds in the literature, especially when $R$ has dimension at most two.

\end{abstract}

\maketitle
\section{Introduction}
Let $R$ be a local ring, and let $M$ and $N$ be finitely generated $R$-modules. An active area of research in commutative algebra has been aimed at understanding the vanishing pattern of the modules $\Ext^i_R(M,N)$ and $\Tor_i^R(M,N)$. One reason for that is one frequently needs to understand the properties of $\Hom_{R}(M,N)$ and $M\otimes_{R}N$, and it naturally leads to investigating the corresponding derived functors. Vanishing of such functors over special rings carries great deal of information; If $R$ is a complete intersection ring and $\Tor_i^R(M,N)=0$ for all $i\geq 1$, then the depth formula holds for $M$ and $N$, i.e., $\depth(M)+\depth(N)=\depth(R)+\depth(M\otimes_{R}N)$ \cite[2.5]{HW}. As another example, it was proved by Avramov and Buchweitz in \cite[Theorem 4.2]{AvBu} that if $R$ is a complete intersection, then $\Ext_R^n(M,M)=0$ for some {\it even} $n$ forces $M$ to have finite projective dimension.

One could argue that the topic started with the elegant rigidity theorem of Auslander and Lichtenbaum \cite[Corollary 2.2]{Au} and \cite[Corollary 1]{Lic}; If $R$ is a regular local ring and $\textnormal{Tor}^{R}_{n}(M,N)=0$ for some nonnegative integer $n$, then $\textnormal{Tor}^{R}_{i}(M,N)=0$ for all $i\geq n$. Murthy \cite[1.6]{Mu} exploited the rigidity result and proved that, over a complete intersection of codimension $r$, $r+1$ consecutive vanishing $\Tor$ modules forces the vanishing of all subsequent $\Tor$s. Various generalizations of Murthy's result have been obtained in the literature. These results mostly focus on the following themes: reducing the number of initial vanishing necessary \cite{Ce, Da3, Da2, HW, Jo1}, studying $\Ext$ instead of $\Tor$ \cite{AvBu, Be1, Jo2}, investigating asymptotic behaviour \cite{Be1, BeJo, Da2, Jo1, Jo3}, and recently, replacing the vanishing assumption of consecutive indexes to that of an arithmetic progression \cite{Be1, BeJo}.

In this paper we study the vanishing pattern of $\Ext^i_R(M,N)$ when $M$ has finite complete intersection dimension, a situation which is slightly general than assuming $R$ is a complete intersection (see Section \ref{Notations} for the definition of finite complete intersection dimension). This assumption on $M$ is quite standard; Modules of finite complete intersection dimension behave homologically like modules over complete intersections \cite{AGP}. On the other hand Murthy's theorem does not extend to rings that are not complete intersections \cite[3.2]{Jo2}, so clearly some assumptions are needed.

One of the main technical tools in this paper is a generalization of \emph{Herbrand difference} that was introduced by Buchweitz in \cite{Bu} (see Definitions \ref{HerBu} and \ref{function}) for maximal Cohen-Macaulay modules over an isolated hypersurface singularity. The Grothendieck group of finitely generated $R$-modules is an additional ingredient  used consistently. Our approach yields sharper results than most of the bounds for vanishing pattern of $\Ext_R^i(M,N)$ previously known. It works especially well in low dimensions where the behaviour of the Grothendieck group is better understood.

As an example, we can state:

\begin{thm} \label{A}
Let $R$ be a local ring, and let $M$ and $N$ be finitely generated $R$-modules. Assume $M$ has finite complete intersection dimension and $R$ satisfies one of the following:
\begin{enumerate}
\item $R$ is Artinian.
\item $R$ is a one-dimensional domain.
\item $R$ is a two-dimensional normal domain with torsion class group.
\end{enumerate}
If $\Ext^{n}_{R}(M,N)=\dots=\Ext^{n+c-1}_{R}(M,N)=0$
for some $n>\depth(R)-\depth(M)$, where $c=\cx_{R}(M)$, the complexity of $M$, then $\Ext^{i}_{R}(M,N)=0$ for all $i>\depth(R)-\depth(M)$.
\end{thm}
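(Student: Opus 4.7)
The plan is to define a generalized Herbrand difference $h(M,N)$---a pairing valued in $\ZZ$ (via length) in the Artinian case, or in a suitable quotient of the Grothendieck group of $R$ in positive dimension---and to show that the hypothesized $c$ consecutive vanishings force $h(M,N)=0$, from which the structural hypotheses on $R$ recover asymptotic Ext-vanishing.

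First I would reduce to the complete intersection setting. By definition of finite complete intersection dimension there is a quasi-deformation $R\to R'\leftarrow Q$ with $R'$ faithfully flat over $R$, $Q\to R'$ surjective with kernel generated by a $Q$-regular sequence, and $\pd_Q(M\tensor_R R')<\infty$; moreover $c=\cx_R(M)$ equals the codimension of this deformation. The Ext hypotheses ascend to $R'$, so after base change we may work over a complete intersection of codimension $c$. There the theory of cohomology operators makes $\bigoplus_i \Ext^i_{R'}(M\tensor R',N\tensor R')$ finitely generated over a polynomial ring in $c$ variables acting in cohomological degree $2$, so the length function $i\mapsto \ell(\Ext^i)$ (where finite) is eventually a quasi-polynomial of degree at most $c-1$ with well-defined even and odd parts.

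The invariant $h(M,N)$ would then be built as the top-degree coefficient of that quasi-polynomial, equivalently an iterated finite difference applied to the Ext-length sequence. A direct recurrence argument shows that $c$ consecutive vanishings force $h(M,N)=0$; the hypothesis $n>\depth(R)-\depth(M)$ places us past the Auslander--Buchsbaum boundary at which nonvanishing could come purely from depth obstructions, so $h(M,N)=0$ combined with the quasi-polynomial structure collapses the entire asymptotic tail of $\Ext^i_R(M,N)$ in the Artinian case.

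The hardest step will be case (3): over a two-dimensional normal domain $\Ext^i(M,N)$ need not be of finite length, so length must be replaced by a finer Grothendieck-group-valued invariant detected on the punctured spectrum. The torsion class-group hypothesis is what makes the pairing well-defined---after tensoring with $\QQ$, $\Cl(R)\tensor\QQ=0$, so the divisor class of any reflexive Ext contribution becomes trivial---and it also bridges ``vanishing on the punctured spectrum'' with ``eventual vanishing'' via the rigidity of reflexives whose divisor classes are torsion. Case (2) then follows by splitting the Ext length into its generic and closed-point contributions and applying the Artinian argument to the closed-point part, so the real work is setting up the Grothendieck-theoretic Herbrand difference in case (3) and verifying the collapse implication there.
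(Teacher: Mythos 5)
Your plan has the logical structure backwards at its central step, and that step is in fact false. You claim that ``a direct recurrence argument shows that $c$ consecutive vanishings force $h(M,N)=0$,'' and only afterwards do the structural hypotheses on $R$ enter. But $c=\cx_R(M)$ can equal $\cx_R(M,N)$, and by Avramov's example (cited in the paper as Remark 4.4, \cite[9.3.7]{Av2}) there exist $M,N$ over a complete intersection with $\cx_R(M)=\cx_R(M,N)=e>0$ and $e$ consecutive vanishing $\Ext$'s past the depth bound, yet $\Ext^i_R(M,N)\neq 0$ for infinitely many $i$; in such an example the generalized Herbrand invariant is nonzero despite the $c$ consecutive vanishings, so no recurrence argument can extract $h(M,N)=0$ from the vanishing hypothesis alone. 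The quasi-polynomial description of the lengths is only valid for $i\gg 0$, not at the window $[n,n+c-1]$, and even on the asymptotic range $c$ consecutive zeros of a degree-$(c-1)$ quasi-polynomial with separate even and odd parts do not force its leading terms to vanish. In the paper the implication runs the other way: the hypotheses (1)--(3) on $R$ are used first, via Proposition 2.6, to show $[N]=0$ in $\overline{G}(R)_{\QQ}$; then, since $\CI_R(M)<\infty$ gives $\Ext^i_R(M,R)=0$ for $i\gg 0$ and $\f(M,N)<\infty$ (because $R$ is regular on the punctured spectrum in all three cases), biadditivity of $\h^R_c(M,-)$ (Theorem 3.4(2)) makes it a linear functional on $\overline{G}(R)_{\QQ}$, whence $\h^R_c(M,N)=0$ \emph{independently} of any $\Ext$-vanishing. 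Only then does the hypothesis $\Ext^n=\dots=\Ext^{n+c-1}=0$ enter, through Theorem 4.2, whose proof is an induction on $e$ using Lemma 2.5 to peel off one cohomology operator at a time and the change-of-rings exact sequence.

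Your treatment of case (3) also replaces the actual mechanism with an unsubstantiated one. Over a two-dimensional normal domain the modules $\Ext^i_R(M,N)$ \emph{do} have finite length for $i\gg 0$ in the relevant regime, because normality makes $R_p$ regular for every non-maximal $p$, so $\pd_{R_p}(M_p)<\infty$ and the localized $\Ext$'s vanish for large $i$; hence the scalar, length-based $\h^R_c$ still applies and no ``Grothendieck-group-valued invariant detected on the punctured spectrum'' or ``rigidity of reflexives'' is needed or justified. The torsion hypothesis on $\Cl(R)$ is used only to prove $\overline{G}(R)_{\QQ}\cong\Cl(R)_{\QQ}=0$, i.e.\ $[N]=0$, which is exactly the input your plan is missing. (A smaller inaccuracy: the codimension of the quasi-deformation need not equal $\cx_R(M)$; one must factor the deformation as in Lemma 2.5 to arrange a presentation of the right length.)
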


Theorem \ref{A} (see Corollaries \ref{cor1} and \ref{alfa8}) reduces the required number of vanishing $\Ext$ modules from $c+1$ to $c$ in some cases (cf. \cite[2.6(1)]{Jo2}). We even improve the above result in Proposition \ref{alfa9} over one-dimensional Gorenstein domains.

In the general case, if there are $c+1$ consecutive vanishing $\Ext$ modules, then we can replace $c$ with the complexity of the pair $(M,N)$, $\cx_{R}(M,N)$, a nonnegative integer that is at most $\cx_{R}(M)$. More precisely, we prove the following result as Corollary \ref{alfa3}:

\begin{thm}\label{B}
Let $R$ be a local ring, and let $M$ and $N$ be finitely generated $R$-modules. Assume $M$ has finite complete intersection dimension. If $\Ext^{n}_{R}(M,N)=\dots=\Ext^{n+c}_{R}(M,N)=0$ for some $n>\depth(R)-\depth(M)$, where $c=\cx_{R}(M,N)$, then $\Ext^{i}_{R}(M,N)=0$ for all $i>\depth(R)-\depth(M)$.
\end{thm}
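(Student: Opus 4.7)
The strategy is to reduce to a complete intersection via a quasi-deformation and then use the action of Eisenbud's cohomology operators on the total Ext module of the pair $(M,N)$, combined with the generalized Herbrand difference and Grothendieck-group tools developed earlier in the paper.

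First I would pass to the completion of $R$ and invoke the definition of finite CI-dimension to obtain a quasi-deformation $R \to R' \twoheadleftarrow Q$: a faithfully flat local extension $R \to R'$ together with a surjection $Q \twoheadrightarrow R'$ whose kernel is generated by a $Q$-regular sequence $f_1,\dots,f_r$. The Ext vanishing, the depth inequality, and the invariants $\depth(M)$, $\depth(R)$, $\cx_R(M,N)$ all transfer cleanly along this change of rings, so I can assume $R = Q/(f_1,\dots,f_r)$ is a complete intersection and that both $M$ and $N$ have finite projective dimension over $Q$.

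In this setting the module $E := \bigoplus_{i\ge 0}\Ext^i_R(M,N)$ is finitely generated and graded over the polynomial ring $\mathcal{S} := R[\chi_1,\dots,\chi_r]$, where the $\chi_j$ are cohomology operators of degree $2$. By the standard Avramov--Buchweitz/Gulliksen dictionary, $c := \cx_R(M,N)$ equals the Krull dimension of the closed-fiber reduction $\overline{E}$ as a $k[\chi_1,\dots,\chi_r]$-module. After a generic invertible linear change of variables in the $\chi_j$'s (which corresponds to a change of the $f_j$'s), I can arrange that $\chi_1,\dots,\chi_{r-c}$ act regularly on $\overline{E}$ up to a finite-length piece, and the long exact change-of-rings sequences attached to each $\chi_j$ preserve consecutive vanishing of $\Ext^{*}_R(M,N)$ in the range $i > \depth R - \depth M$. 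Iterating this $r-c$ times reduces the problem to the case $r = c$, i.e., the case where the embedding codimension of the complete intersection equals the pair complexity.

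At that point the generalized Herbrand difference of Definitions \ref{HerBu} and \ref{function} becomes the effective invariant: the $c+1$ consecutive vanishing Ext's collapse the relevant alternating sum to zero, and the fact that this pairing factors through the Grothendieck group $G_0(R)$ and is suitably nondegenerate above the depth threshold $n > \depth R - \depth M$ forces $\Ext^{i}_R(M,N) = 0$ for all $i > \depth R - \depth M$. Descending through the faithfully flat map $R \to R'$ and the completion recovers the conclusion over the original ring. The main obstacle I expect is the sharpness: replacing $\cx_R(M)$ by $\cx_R(M,N)$ requires working with the support variety of the \emph{pair}, not just of $M$, and verifying that each generic reduction by a cohomology operator decreases $\cx(M,N)$ by exactly one while preserving the depth inequality, so that no cohomology operator is ``wasted'' on the support of $M$ alone. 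This bookkeeping is precisely what the pair-based Herbrand framework of the paper is engineered to handle.
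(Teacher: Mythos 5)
Your reduction steps (completing, choosing a quasi-deformation, invoking Gulliksen's theorem so that $\bigoplus_i\Ext^i_R(M,N)$ is Noetherian over the cohomology operators, and changing the presentation so that the codimension matches the pair complexity) do track the paper's Remark \ref{alfa1} and Lemma \ref{factor}, except for two inaccuracies: a quasi-deformation only gives $\pd_P(M\otimes_RS)<\infty$, so you cannot also assume $N$ has finite projective dimension over $Q$ (only $\Ext^i_Q(M,N)=0$ for $i\gg 0$ is available, which suffices), and the "generic" choice of operators needs an infinite residue field and, for the complexity bookkeeping, an Artinian closed fiber. The more serious problem is that you never establish $\f(M,N)<\infty$, i.e.\ that $\Ext^i_R(M,N)$ has finite length for $i\gg 0$. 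Without this the generalized Herbrand function of Definition \ref{function} is simply not defined, so the entire second half of your argument has no starting point. In the paper this is exactly what the proof of Corollary \ref{alfa3} supplies, by induction on $\dim R$: localizing at a non-maximal prime $p$ preserves $\CI<\infty$ and does not increase $\cx(M,N)$, so the induction hypothesis kills $\Ext^i_{R_p}(M_p,N_p)$ for large $i$, forcing the high Ext modules to have finite length.

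The closing mechanism you propose is also not the right one. The $c+1$ consecutive vanishings do not "collapse the alternating sum to zero": $\h^R_{c+1}(M,N)=0$ holds automatically, with no vanishing hypothesis, because the partial alternating sums of the lengths grow at most like $n^{c}$ (Theorem \ref{beta2}(1)). Conversely, $\h^R_{e}(M,N)=0$ together with a window of $e$ vanishings does not follow from any "nondegeneracy" of a pairing through the Grothendieck group — the Grothendieck group plays no role in this theorem at all; it is used only in Proposition \ref{alfa7}, in the setting of Theorem \ref{A}, to force $\h^R_c(M,N)=0$ when merely $c$ vanishings are assumed. What actually finishes the proof is Theorem \ref{alfa2}, an induction on $e$ in which one cohomology operator is killed at a time using the change-of-rings identity $2\,\h^R_e(M,N)=\h^{R'}_{e-1}(M,N)$ of Theorem \ref{beta2}(3); the long exact sequence of Proposition \ref{1.2} converts $\Ext^n_R=\dots=\Ext^{n+e-1}_R=0$ into only $\Ext^{n+1}_{R'}=\dots=\Ext^{n+e-1}_{R'}=0$, so each descent \emph{loses} one vanishing while $\cx(M,N)$ drops by one, and the base case $e=1$ uses two-periodicity together with $\h_1=0$ to equalize consecutive lengths. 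Your assertion that the change-of-rings sequences "preserve consecutive vanishing" misses precisely this loss-of-one phenomenon, which is the reason the general statement needs $c+1$ vanishings rather than $c$; as stated, your argument would prove the $c$-vanishing statement in general, which is false by Avramov's example cited in Remark \ref{alfa4}.
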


Various results of similar flavours can be found in Section \ref{mainSection}.

We now describe the organization of the paper. Section \ref{Notations} contains notations and some preparatory results. In Section \ref{eta} we develop some of the general results about the asymptotic behaviour of $\Ext^i_R(M,N)$ under the extra condition that they eventually have finite length. This section culminates with a definition of the generalized Herbrand difference and an investigation of its basic properties. In Section \ref{mainSection}, we prove our main results on the vanishing pattern of $\Ext^i_R(M,N)$.

\section{Notations and Preliminary Results}\label{Notations}

Let $(R,\mathfrak{m},k)$ be a \textit{local} ring, i.e., a commutative
Noetherian ring with unique maximal ideal $\mathfrak{m}$, and let
$M$ and $N$ be a finitely generated $R$-modules.

The \textit{complexity} of a sequence of non-negative integers $B =
\{b_i\}_{i\geq 0}$ is defined in \cite{DaV} as follows: $$\cx(B) = \inf \ \{r\in \ZZ \mid
b_n\leq A \cdot n^{r-1} \ \text {for some real number} \  A  \ \text
{and for all} \ n\gg 0 \}$$ According to this notation the
complexity of the pair $(M,N)$, introduced in \cite{AvBu}, can be
defined as:
$$\cx_{R}(M,N) = \cx\left( \{ \nu_{R}(\Ext^{i}_{R}(M,N)) \} \right)$$
where $\nu_{R}(X)$ denotes the minimal number of generators of a finitely generated
$R$-module $X$. The complexity $\cx_{R}(M)$ and the \textit{plexity} $\px_{R}(M)$ \cite{Av1} of $M$
are defined as $\cx_{R}(M,k)$ and $\cx_{R}(k,M)$, respectively.

If $\textbf{F}:\ldots \rightarrow F_{2}
\rightarrow F_{1} \rightarrow F_{0}  \rightarrow 0$ is a minimal free
resolution of $M$ over $R$, then the rank of $F_{n}$, i.e., the integer $\dim_{k}(\Ext^{n}_{R}(M,k))$,
is the $n$th \textit{Betti} number of $M$. This integer is well-defined for all $n$ since minimal free resolutions over $R$ are unique up to isomorphism. It follows from the definition of the complexity that $M$ has finite projective dimension if and only if $\cx_{R}(M)=0$, and has bounded Betti numbers if and only if $\cx_{R}(M)\leq 1$.

A \textit{quasi-deformation} of $R$ \cite{AGP} is a
diagram $R \rightarrow S \twoheadleftarrow P$ of local
homomorphisms, where $R\rightarrow S$ is flat and
$S\twoheadleftarrow P$ is surjective with kernel generated by a
regular sequence of $P$ contained in the maximal ideal of $P$. $M$ is said to have finite \textit{complete
intersection dimension}, denoted by $\CI_{R}(M)<\infty$, if there
exists a quasi-deformation $R \rightarrow S \twoheadleftarrow P$
such that $\pd_{P}(M\tensor_{R}S)<\infty$. Modules of finite projective dimension and modules over complete intersection
rings have finite complete intersection dimension. Recall that $R$ is said to be a \textit{complete intersection} if the defining ideal of some (equivalently every) Cohen presentation of the $\mathfrak m$-adic completion $\widehat{R}$ of $R$
can be generated by a regular sequence. If $R$ is such a ring, then $\widehat{R}$ has the form $Q/(\underline{f})$,
where $\underline{f}$ is a regular sequence of $Q$ and $Q$ is a ring of formal power series over the field $k$, or over
a complete discrete valuation ring with residue field $k$. The non-negative integer $\nu_{R}(\mathfrak m)-\dim(R)$ is the \textit{codimension} of $R$. A complete intersection of codimension one is called a \textit{hypersurface}.

If $\CI_{R}(M)<\infty$, then it follows from \cite[4.1.2]{AvBu} and \cite[1.4 \& 5.6]{AGP} that the following (in)equalities hold:
$$\CI_{R}(M)=\depth(R)-\depth(M) \textnormal{ and }
\cx_{R}(M,N)\leq \cx_{R}(M)\leq \nu_{R}(\mathfrak m)-\depth(R).$$

Assume $R=Q/(\underline{f})$ where $Q$ is a local
ring and $\underline{f}=f_{1},f_{2}, \dots, f_{r}$ is a regular
sequence of $Q$. Given a minimal free resolution
$\textbf{F}$ of $M$ over $R$, the regular sequence $\underline{f}$ gives rise to
chain maps, of degree $-2$, on $\textbf{F}$ (cf. \cite{Av1} and \cite{Ei}). These
chain maps are uniquely defined and commute, up to homotopy. They
give rise to a polynomial ring of cohomology operators
$\mathcal{S}=R[\chi_{1}, \chi_{2}, \ldots, \chi_{r}]$ with variables $\chi_{i}$ of
degree two, and the graded $R$-module $\Ext^{\ast}_{R}(M,N)$, that is the direct sum of
$\Ext^{i}_{R}(M,N)$ for all $i\geq 0$, has a graded module structure over $\mathcal{S}$.

If the length of the modules $\Ext^i_R(M,N)$ is finite for all $i\gg 0$, then we denote by $\f(M,N)$ the set $\textnormal{inf}\{s:\len_{R}(\Ext^i_R(M,N)<\infty \textnormal{ for all } i\geq s \}$,
where $\len_{R}(X)$ denotes the length of an $R$-module $X$.
For notational convenience, we shall write $\beta_i^R(M,N) = \len_{R}(\Ext^i_R(M,N))$ for all $i\geq \f(M,N)$.

We now record some of the results that will be used in the next sections.

\begin{thm} \label{Gul} (\cite[4.2]{AGP} and \cite[3.1]{Gu}) Let $R$ be a ring such that $R=Q/(\underline{f})$
where $Q$ is a local ring and $\underline{f}$ is a regular sequence of $Q$, and let $M$ and $N$ be finitely generated $R$-modules.  Let $\mathcal{S}=R[\underline{\chi}]$ be the ring of cohomology operators defined by the regular sequence $\underline{f}$. Then $\Ext^{\ast}_{R}(M,N)$ is a finitely generated graded module over $\mathcal{S}$ if and only if $\Ext^{i}_{Q}(M,N)=0$ for all $i\gg 0$.
\end{thm}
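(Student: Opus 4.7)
The plan is to proceed by induction on $r$, the length of the regular sequence $\underline{f} = f_1, \ldots, f_r$. The base case $r = 0$ reduces to a standard fact: a non-negatively graded module $\bigoplus_{i \geq 0} E^i$ whose graded pieces are finitely generated over the Noetherian ring $R$ is finitely generated over $R$ itself (viewed as concentrated in degree zero) if and only if only finitely many $E^i$ are nonzero. Since each $\Ext^i_R(M,N)$ is automatically finitely generated over $R$, this matches the claim when $r = 0$.

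For the main case $r = 1$, write $R = Q/(f)$ with $f$ a non-zero-divisor on $Q$. I would invoke the Cartan--Eilenberg change of rings spectral sequence
\[
E_2^{p,q} = \Ext^p_R\bigl(M, \Ext^q_Q(R, N)\bigr) \;\Longrightarrow\; \Ext^{p+q}_Q(M, N).
\]
The two-term $Q$-free resolution $0 \to Q \xrightarrow{f} Q \to R \to 0$, together with the fact that $f$ annihilates the $R$-module $N$, forces $\Ext^0_Q(R,N) \cong \Ext^1_Q(R,N) \cong N$ and $\Ext^q_Q(R,N) = 0$ for $q \geq 2$. Only the rows $q = 0, 1$ are nonzero, the spectral sequence collapses at $E_3$, and the single surviving differential $d_2 \colon E_2^{p,1} \to E_2^{p+2,0}$ is, up to sign, the cohomology operator $\chi = \chi_1$. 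Reading off the abutment then gives short exact sequences that make $\Ext^n_Q(M, N) = 0$ for all $n \gg 0$ equivalent to $\chi$ being eventually bijective on $\Ext^*_R(M, N)$.

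It remains to show that eventual bijectivity of $\chi$ is equivalent to finite generation of $\Ext^*_R(M, N)$ over $\mathcal{S} = R[\chi]$. Eventual surjectivity of $\chi$ is equivalent to finite generation, since each graded piece is finitely generated over $R$ and generators can be chosen in bounded degree. For eventual injectivity, I would use that $R[\chi]$ is Noetherian: any finitely generated graded $R[\chi]$-module is Noetherian, and $\ker(\chi)$, being a graded submodule annihilated by $\chi$ and thus a finitely generated $R$-module, must live in bounded degrees.

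For the inductive step, set $Q' = Q/(f_1, \ldots, f_{r-1})$, so that $R = Q'/(f_r)$, and apply the $r = 1$ analysis to $Q' \twoheadrightarrow R$. The main obstacle is combining the resulting statement about finite generation over $R[\chi_r]$ with the induction hypothesis for $(Q, Q')$ to bring in the operators $\chi_1, \ldots, \chi_{r-1}$, which are originally built from a $Q$-derived resolution rather than an $R$-derived one. The cleanest way to handle this is to invoke the Eisenbud--Shamash construction, which produces a $Q$-free resolution of $M$ from an $R$-free resolution together with a system of higher homotopies for $\underline{f}$ and realizes each $\chi_i$ as the mod-$f_i$ reduction of one of these homotopies; this gives the precise compatibility between the various cohomology actions needed to close the induction.
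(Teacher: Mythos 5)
The paper itself does not prove this theorem; it quotes it from \cite[4.2]{AGP} and \cite[3.1]{Gu}, so your sketch has to be judged against those sources. Your base case and your $r=1$ case are essentially correct and follow a standard route: the change-of-rings spectral sequence has only the rows $q=0,1$, its $d_2$ is (up to sign) the cohomology operator $\chi$ --- this identification is a genuine theorem (Eisenbud; Avramov--Sun) but is fair to quote --- and the resulting long exact sequence shows that $\Ext^i_Q(M,N)=0$ for $i\gg 0$ is equivalent to $\chi$ being eventually bijective, while your graded arguments correctly identify this with finite generation over $R[\chi]$.

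The gap is in the inductive step. Applying your $r=1$ case to $Q'\twoheadrightarrow R$ gives: $\Ext^{\ast}_R(M,N)$ is finitely generated over $R[\chi_r]$ if and only if $\Ext^i_{Q'}(M,N)=0$ for $i\gg 0$. This cannot be combined with the induction hypothesis for $(Q,Q')$, because for $r\geq 2$ the case of interest is exactly the one where $\Ext^i_{Q'}(M,N)$ does \emph{not} eventually vanish even though $\Ext^i_Q(M,N)$ does: take $M=N=k$ with $Q$ regular and $R$ a codimension-two complete intersection, so that $\Ext^{\ast}_R(k,k)$ has complexity two and is not finitely generated over any single $R[\chi_j]$, while $\Ext^i_{Q'}(k,k)\neq 0$ for all $i$. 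So the obstruction is not only the compatibility of $\chi_1,\dots,\chi_{r-1}$ with the operators on the $Q'$-level (which Eisenbud's higher homotopies, or Avramov--Sun, do provide); it is that the one-variable step must be proved in the relative form Gulliksen actually uses: for a graded ring $\mathcal{T}$ of operators acting compatibly on the change-of-rings long exact sequence, $\Ext^{\ast}_R(M,N)$ is Noetherian over $\mathcal{T}[\chi_r]$ if and only if $\Ext^{\ast}_{Q'}(M,N)$ is Noetherian over $\mathcal{T}$. The proof of that relative statement is not your eventual-bijectivity argument (which characterizes finite generation over $R[\chi_r]$ alone and is false in the relative setting); rather, one direction uses that for $E=\Ext^{\ast}_R(M,N)$ Noetherian over $\mathcal{T}[\chi_r]$ the modules $\ker(\chi_r|_E)$ and $E/\chi_r E$ are Noetherian over $\mathcal{T}$ and the long exact sequence exhibits $\Ext^{\ast}_{Q'}(M,N)$ as an extension of them, while the converse embeds $E/\chi_r E$ into $\Ext^{\ast}_{Q'}(M,N)$ and then lifts finite generation to $E$ by a graded Nakayama argument (degrees bounded below, $\chi_r$ of degree two). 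With that relative statement the induction closes; as written, your passage from $r-1$ to $r$ does not.
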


\begin{prop}\label{lencx} Let $R$ be a ring such that $R=Q/(\underline{f})$
where $Q$ is a local ring and $\underline{f}$ is a regular sequence of $Q$.
Let $M$ and $N$ be a finitely generated $R$-modules. Assume $\f(M,N)<\infty$ and $\Ext^{i}_{Q}(M,N)=0$ for all $i\gg 0$.
Then $\cx_{R}(M,N)=\cx\left( \{ \len_{R}(\Ext^{i}_{R}(M,N)) \} \right)$.
\end{prop}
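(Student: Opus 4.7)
The plan is to show that the two sequences $\{\nu_R(\Ext^i_R(M,N))\}$ and $\{\len_R(\Ext^i_R(M,N))\}$ have the same complexity by trapping their terms within a bounded ratio for $i$ large. Since $\nu_R(X) \leq \len_R(X)$ for any finite length module $X$, one inequality of complexities is automatic; the content is to reverse it.

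First I would invoke Theorem \ref{Gul}: the hypothesis $\Ext^i_Q(M,N)=0$ for $i\gg 0$ makes $E:=\Ext^{\ast}_R(M,N)$ into a finitely generated graded module over $\mathcal{S}=R[\chi_1,\ldots,\chi_r]$. Because $\mathcal{S}$ is Noetherian, the graded submodule $E_{\geq s}:=\bigoplus_{i\geq s}\Ext^i_R(M,N)$, where $s:=\f(M,N)$, is again finitely generated over $\mathcal{S}$. Pick homogeneous generators $g_1,\ldots,g_m$ of $E_{\geq s}$ with $g_j\in \Ext^{i_j}_R(M,N)$ for some $i_j\geq s$.

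The key step, and the only really delicate point in the proof, is to produce a single integer $t$ with $\mathfrak{m}^t\Ext^i_R(M,N)=0$ for \emph{every} $i\geq s$. Because $i_j\geq s=\f(M,N)$, each $\Ext^{i_j}_R(M,N)$ has finite length, so I can choose $t_j$ with $\mathfrak{m}^{t_j}g_j=0$ and set $t:=\max_j t_j$. Since the cohomology operators $\chi_1,\ldots,\chi_r$ act on $E$ as $R$-linear maps, for any $p\in\mathcal{S}$ one has $\mathfrak{m}^t\cdot p(\chi)g_j=p(\chi)\cdot\mathfrak{m}^t g_j=0$; as every element of $\Ext^i_R(M,N)$ with $i\geq s$ is an $\mathcal{S}$-linear combination of the $g_j$, this forces $\mathfrak{m}^t\Ext^i_R(M,N)=0$. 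It is precisely the finite generation supplied by Theorem \ref{Gul} that allows $t$ to be chosen uniformly in $i$.

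With $t$ in hand, each $\Ext^i_R(M,N)$ with $i\geq s$ is a module over $R/\mathfrak{m}^t$ generated by $\nu_R(\Ext^i_R(M,N))$ elements, hence a quotient of $(R/\mathfrak{m}^t)^{\nu_R(\Ext^i_R(M,N))}$, yielding
\[
\nu_R(\Ext^i_R(M,N)) \;\leq\; \len_R(\Ext^i_R(M,N)) \;\leq\; \len_R(R/\mathfrak{m}^t)\cdot \nu_R(\Ext^i_R(M,N)).
\]
Since $\len_R(R/\mathfrak{m}^t)$ is a constant independent of $i$, the definition of $\cx$ directly gives $\cx(\{\nu_R(\Ext^i_R(M,N))\})=\cx(\{\len_R(\Ext^i_R(M,N))\})$, which, by the definition of $\cx_R(M,N)$, is the desired equality.
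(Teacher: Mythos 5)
Your proof is correct, and its skeleton matches the paper's: both start from Theorem \ref{Gul} to get $\Ext^{\ast}_R(M,N)$ finitely generated over $\mathcal{S}=R[\underline{\chi}]$, both hinge on producing a single power $\mathfrak{m}^t$ annihilating $\Ext^i_R(M,N)$ for all large $i$, and both then pass from that uniform annihilation to the equality of the two complexities. The difference is in how the two key steps are carried out. The paper obtains the uniform annihilator by citing the eventual two-periodicity of the ideals $\Ann_R(\Ext^i_R(M,N))$ from \cite[2.4]{Da2}, and then concludes by citing \cite[2.5]{DaV}; you instead argue both steps from scratch. Your device for the uniform annihilator --- that the truncation $E_{\geq s}$ is a finitely generated graded $\mathcal{S}$-submodule whose homogeneous generators sit in degrees $\geq \f(M,N)$, hence in finite-length modules, so a single $\mathfrak{m}^t$ kills all generators and therefore all of $E_{\geq s}$ by $R$-linearity of the operators --- is a genuine simplification: it needs only Noetherianity of $\mathcal{S}$, not the stronger periodicity statement. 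Your final step, sandwiching $\len_R$ between $\nu_R$ and $\len_R(R/\mathfrak{m}^t)\cdot\nu_R$, is essentially the content of the cited \cite[2.5]{DaV}, written out explicitly. So you trade brevity on the page for a self-contained and somewhat more elementary argument; nothing in your write-up has a gap.
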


\begin{proof} Theorem \ref{Gul} shows that the graded module $\Ext^{\ast}_{R}(M,N)$ is Noetherian over the cohomology operators $\mathcal{S}$ defined by the regular sequence $\underline{f}$. Thus the sequence of ideals $\{\Ann_{R}(\Ext^{i}_{R}(M,N))\}$ eventually becomes periodic of period two \cite[2.4]{Da2}. Hence, since $\f(M,N)<\infty$, one can find a positive integer $h$ such that $\mathfrak m^{h}\Ext^{i}_{R}(M,N)=0$ for all $i\gg 0$, where $\mathfrak m$ is the unique maximal ideal of $R$. Now the result follows from \cite[2.5]{DaV}.
\end{proof}

\begin{prop}\label{1.2} (\cite[11.65]{J}) \label{longexact}
Let $R=Q/(x)$ where $Q$ is a commutative ring and $x$ is a non-zerodivisor of $Q$.
If $M$ and $N$ are $R$-modules, then one has the change of rings long exact sequence of
$\Ext$:
\begin{equation*}
\begin{split}
0 \to \Ext_R^1(M,N) \to  \Ext_Q^1(M,N) \to \Ext_R^{0}(M,N) \to \dots  \to \Ext_R^n(M,N) \to \Ext_Q^n(M,N) \to \\  \Ext_R^{n-1}(M,N) \to  \Ext_R^{n+1}(M,N) \to \Ext_Q^{n+1}(M,N) \to \Ext_R^{n}(M,N) \to \dots
\end{split}
\end{equation*}
\end{prop}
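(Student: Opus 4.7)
The plan is to derive the long exact sequence from the Cartan--Eilenberg change of rings spectral sequence, exploiting the fact that $R=Q/(x)$ admits a very short projective resolution over $Q$.

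First, because $x$ is a nonzerodivisor on $Q$, the sequence
$$0 \to Q \xrightarrow{x} Q \to R \to 0$$
is a $Q$-projective resolution of $R$ of length one. For any $R$-module $N$, applying $\Hom_Q(-,N)$ to this resolution yields the two-term complex $N \xrightarrow{x} N$; since $N$ is annihilated by $x$, this map is zero. Hence $\Ext_Q^0(R,N)\cong N$, $\Ext_Q^1(R,N)\cong N$, and $\Ext_Q^q(R,N)=0$ for $q\geq 2$.

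Second, I would feed this computation into the change of rings spectral sequence
$$E_2^{p,q} \;=\; \Ext_R^p\!\bigl(M,\Ext_Q^q(R,N)\bigr)\;\Longrightarrow\;\Ext_Q^{p+q}(M,N).$$
Only the rows $q=0$ and $q=1$ are nonzero, and both coincide with $\{\Ext_R^p(M,N)\}_{p\geq 0}$. For degree reasons all differentials $d_r$ with $r\geq 3$ vanish, so the spectral sequence collapses at $E_3$. On each diagonal $p+q=n$ the abutment is therefore filtered by two successive quotients, giving a short exact sequence $0 \to E_\infty^{n-1,1} \to \Ext_Q^n(M,N) \to E_\infty^{n,0} \to 0$, in which $E_\infty^{n,0}$ is the cokernel and $E_\infty^{n-1,1}$ is the kernel of the $d_2$-differentials $\Ext_R^{n-2}(M,N)\to\Ext_R^n(M,N)$ and $\Ext_R^{n-1}(M,N)\to\Ext_R^{n+1}(M,N)$, respectively. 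Splicing these short exact sequences over all $n$ produces the displayed long exact sequence.

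The main obstacle is bookkeeping: verifying that the edge homomorphisms of the spectral sequence correspond to the natural maps $\Ext_R^p(M,N)\to\Ext_Q^p(M,N)$ and $\Ext_Q^p(M,N)\to\Ext_R^{p-1}(M,N)$, and that the $d_2$-differentials are exactly the connecting maps $\Ext_R^{p-1}(M,N)\to\Ext_R^{p+1}(M,N)$ of the claimed sequence, with consistent signs. A more hands-on alternative avoids spectral sequences entirely: lift a free $R$-resolution $F_\bullet \to M$ to $Q$-free modules $\widetilde F_\bullet$, exploit the fact that the lifted differentials square to $x$ times a chain homotopy (because they square to zero modulo $x$ and $x$ is regular on $Q$), and read off the long exact sequence from the resulting double complex after applying $\Hom_Q(-,N)$. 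Either route ultimately reduces to the single input that $R$ has a $Q$-projective resolution of length one.
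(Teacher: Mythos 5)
The paper gives no argument for this proposition; it is imported verbatim from Rotman \cite[11.65]{J}, and Rotman's own proof is the spectral-sequence argument, so your route is essentially the cited one rather than a genuinely different alternative. Your ingredients are correct: the length-one $Q$-free resolution of $R$ gives $\Ext_Q^0(R,N)\cong N\cong\Ext_Q^1(R,N)$ and $\Ext_Q^q(R,N)=0$ for $q\geq 2$, and the change-of-rings spectral sequence $E_2^{p,q}=\Ext_R^p\bigl(M,\Ext_Q^q(R,N)\bigr)\Rightarrow\Ext_Q^{p+q}(M,N)$ needs no extra hypotheses, since $\Hom_Q(R,-)$ takes injective $Q$-modules to injective $R$-modules and $\Hom_R(M,\Hom_Q(R,-))\cong\Hom_Q(M,-)$; a two-row spectral sequence then collapses at $E_3$ and yields a long exact sequence. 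The one slip is in the filtration bookkeeping you flagged: with the standard convention $E_\infty^{n,0}$ is a \emph{submodule} of $\Ext_Q^n(M,N)$ (the image of the edge map $\Ext_R^n(M,N)\to\Ext_Q^n(M,N)$) and $E_\infty^{n-1,1}$ is the corresponding quotient, so the short exact sequences read $0\to E_\infty^{n,0}\to\Ext_Q^n(M,N)\to E_\infty^{n-1,1}\to 0$, the reverse of what you wrote; as written, your sequences do not splice into the stated form. With the correct orientation, and with $E_\infty^{n,0}=\coker\bigl(d_2\colon E_2^{n-2,1}\to E_2^{n,0}\bigr)$ and $E_\infty^{n-1,1}=\ker\bigl(d_2\colon E_2^{n-1,1}\to E_2^{n+1,0}\bigr)$ exactly as you identified them, splicing gives precisely $\cdots\to\Ext_R^n(M,N)\to\Ext_Q^n(M,N)\to\Ext_R^{n-1}(M,N)\to\Ext_R^{n+1}(M,N)\to\cdots$, including the stated beginning in degree one. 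This is a sign/convention fix, not a gap in the method; your sketched alternative via lifting an $R$-free resolution to $Q$ (Eisenbud's construction) would also work.
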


\begin{lem}\label{factor}
Let $R\twoheadleftarrow P$ be a surjection of local rings, and let $M$ and $N$ be finitely generated $R$-modules. Assume $\pd_P(M)<\infty$ and the kernel of $R\twoheadleftarrow P$ is generated by a regular sequence of $P$. Assume further that the residue field $k$ of $P$ is infinite. Set $c=\cx_R(M,N)$. If $c\geq 1$, then the surjection $R\twoheadleftarrow P$ can be factored as $R\twoheadleftarrow Q\twoheadleftarrow P$ such that $\cx_{Q}(M,N)=0$ and the kernel of $R \twoheadleftarrow Q$ is generated by a regular sequence of $Q$ of length $c$. Furthermore, if $\f(M,N)<\infty$ (i.e., $\len_R(\Ext_R^i(M,N))<\infty$ for all $i\gg0$), then there exits a local ring $R'$ and a non-zerodivisor $x$ of $R'$ such that $R=R'/(x)$, $\CI_{R'}(M)<\infty$ and $\cx_{R'}(M,N) = \cx_R(M,N)-1$.
\end{lem}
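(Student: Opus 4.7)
The kernel of $P\twoheadrightarrow R$ is an ideal $I$ generated by a regular $P$-sequence $\underline{f}=f_1,\ldots,f_r\in\mathfrak{m}_P$. Since $\pd_P(M)<\infty$, Theorem~\ref{Gul} endows $\Ext^{*}_R(M,N)$ with a finitely generated graded module structure over the cohomology operators $\mathcal{S}=R[\chi_1,\ldots,\chi_r]$, and $c=\cx_R(M,N)$ coincides with the Krull dimension of $\Ext^{*}_R(M,N)\otimes_R k$ as a graded $k[\chi_1,\ldots,\chi_r]$-module (whose graded pieces have $k$-dimension $\nu_R(\Ext^{i}_R(M,N))$). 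Exploiting that $k$ is infinite, a Noether-normalization style argument provides a generic linear change of coordinates on $k[\chi_1,\ldots,\chi_r]$ whose last $c$ coordinates form a homogeneous system of parameters for $\Ext^{*}_R(M,N)\otimes_R k$. Lifting this change of coordinates via an invertible $P$-linear transformation of $\underline{f}$ yields new generators $g_1,\ldots,g_r$ of $I$ that still form a regular $P$-sequence.

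For part (1), set $Q:=P/(g_1,\ldots,g_{r-c})$. Then $R=Q/(\overline{g}_{r-c+1},\ldots,\overline{g}_r)$, where the $\overline{g}_i$ (images in $Q$) form a regular sequence of length $c$; moreover $\pd_P(M)<\infty$ propagates to $\pd_Q(M)<\infty$ (going modulo a $P$-regular sequence), giving $\cx_Q(M,N)=0$. For part (2), under the additional hypothesis $\f(M,N)<\infty$, define $R':=Q/(\overline{g}_{r-c+1},\ldots,\overline{g}_{r-1})$ and $x:=\overline{g}_r\in R'$. Then $R=R'/(x)$ with $x$ a non-zerodivisor of $R'$ by the regular sequence property, and the quasi-deformation $R'\to R'\twoheadleftarrow Q$ (identity flat map) together with $\pd_Q(M)<\infty$ establishes $\CI_{R'}(M)<\infty$. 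For the equality $\cx_{R'}(M,N)=c-1$, the upper bound $\leq c-1$ follows from Theorem~\ref{Gul} applied to $R'\twoheadleftarrow Q$: $\Ext^{*}_{R'}(M,N)$ is finitely generated over a polynomial ring in $c-1$ cohomology operators, so its Betti numbers grow polynomially of degree at most $c-2$. The lower bound $\geq c-1$ uses Proposition~\ref{longexact} for $R=R'/(x)$ to obtain length inequalities such as $\len_R(\Ext^{n+1}_R(M,N))-\len_R(\Ext^{n-1}_R(M,N))\leq\len_{R'}(\Ext^{n+1}_{R'}(M,N))$ for $n\gg 0$, which, combined with Proposition~\ref{lencx} applied to both $R$ and $R'$ (valid because $\pd_Q(M)<\infty$ forces $\Ext^{i}_Q(M,N)=0$ for $i\gg 0$), translates $\cx_R(M,N)=c$ into $\cx_{R'}(M,N)\geq c-1$.

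The main obstacle is the generic change-of-basis step, where the infiniteness of $k$ is essential: I need new generators of $I$ whose cohomology operators cleanly split $\Spec(k[\chi_1,\ldots,\chi_r])$ into the $c$-dimensional support of $\Ext^{*}_R(M,N)\otimes_R k$ and a transverse $(r-c)$-dimensional part. Once this factorization is in place, the construction of $Q$ and $R'$ and the verification of regularity and the quasi-deformation are routine. In part (2), the factorization through $Q$ with regular sequence length exactly $c$ is precisely what powers the sharp cohomology-operator upper bound $c-1$, while the hypothesis $\f(M,N)<\infty$ enters only at the final length-based argument for the lower bound, via Proposition~\ref{lencx}.
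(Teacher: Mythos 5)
Your construction follows the paper's blueprint (generic change of the generators $\underline{f}$ using the infinite residue field, then splitting off $Q$ and $R'$), but the pivotal step is justified by a false claim: that $\pd_P(M)<\infty$ ``propagates'' to $\pd_Q(M)<\infty$ because $Q=P/(g_1,\dots,g_{r-c})$ is obtained by going modulo a $P$-regular sequence. That change-of-rings principle requires the sequence to be regular on $M$ as well; here every $g_i$ lies in the kernel of $P\twoheadrightarrow R$, hence annihilates $M$, so it is never $M$-regular. In general $\pd_Q(M)=\infty$ (already for $P$ regular, $M=k$ and $Q$ a singular quotient of $P$ by part of the sequence), and even the weaker statement you actually need, $\cx_Q(M,N)=0$, fails for an arbitrary splitting of the regular sequence --- this is precisely where the generic choice, and hence the hypothesis that $k$ is infinite, matters. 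Indeed, your justification never mentions $N$, whereas $\cx_Q(M,N)$ depends on the pair; if finite projective dimension over $Q$ were automatic, the Noether-normalization step you performed would be superfluous. The correct argument (and the paper's) uses exactly the system-of-parameters property you set up and then never used: since $\chi_{r-c+1},\dots,\chi_r$ form a system of parameters on $\mathcal{E}=\Ext^{\ast}_R(M,N)\otimes_R k$, the module $\mathcal{E}$ is finitely generated over $k[\chi_{r-c+1},\dots,\chi_r]$; graded Nakayama lifts this to finite generation of $\Ext^{\ast}_R(M,N)$ over the cohomology operators of the presentation $R=Q/(\overline{g}_{r-c+1},\dots,\overline{g}_r)$, and then the converse direction of Theorem \ref{Gul} yields $\Ext^i_Q(M,N)=0$ for all $i\gg 0$, i.e.\ $\cx_Q(M,N)=0$.

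The same false claim also underlies your proof that $\CI_{R'}(M)<\infty$ (via the quasi-deformation $R'\to R'\twoheadleftarrow Q$ with $\pd_Q(M)<\infty$). This is easy to repair: $R'\to R'\twoheadleftarrow P$ is a quasi-deformation with $\pd_P(M)<\infty$, since the kernel of $P\twoheadrightarrow R'$ is generated by the regular sequence $g_1,\dots,g_{r-1}$; the paper argues instead via $\CI_{R'}(M)+(c-1)\le \CI_Q(M)$ from \cite[1.2.3]{AGP}. Your bookkeeping for $\cx_{R'}(M,N)=c-1$ --- the upper bound from finite generation over $c-1$ operators, the lower bound from the long exact sequence of Proposition \ref{longexact} combined with Proposition \ref{lencx} --- is sound and essentially the paper's argument (the inequalities drawn from \cite[1.5]{AGP}), but it only becomes valid once $\cx_Q(M,N)=0$ has been established correctly; likewise, when invoking Proposition \ref{lencx} you should rely either on the presentation $R'=P/(g_1,\dots,g_{r-1})$ with $\pd_P(M)<\infty$, or on the correctly proved vanishing $\Ext^i_Q(M,N)=0$ for $i\gg0$, not on $\pd_Q(M)<\infty$.
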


\begin{proof}
The proof follows from that  of \cite[9.3.1]{Av1} and \cite[5.9]{AGP}. Note that Theorem \ref{Gul} shows the graded module $\mathcal{E}=\Ext^{\ast}_{R}(M,N)\tensor_{R}k$ is finitely generated over $\mathcal{R}=\mathcal{S}\tensor_{R}k$, where $\mathcal{S}=R[\underline{\chi}]$ is the ring of cohomology operators defined by the deformation $R\twoheadleftarrow P$. Then, as $\mathcal{E}\neq 0$, $\dim_{\mathcal{R}}\mathcal{E}=c\geq 1$ \cite[1.3]{AvBu}.
The kernel of $R\twoheadleftarrow P$ can be generated by a regular sequence $f_{1},f_{2}, \dots, f_{r}$ that defines $\underline{\chi}=\chi_{1},\chi_{2},\dots,\chi_{r}$ such that; (i) the ring of cohomology operators defined by the presentation $R=Q/(f_{1},f_{2}, \dots, f_{c})$, where $Q=P/(f_{c+1},f_{2}, \dots, f_{r})$, is identified with $\mathcal{R'}=k[\chi_{1},\chi_{2},\dots,\chi_{c}] \subseteq \mathcal{R}$ and (ii) $\chi_{1},\chi_{2},\dots,\chi_{c}$ form a system of parameters on $\mathcal{E}$. Since $\mathcal{E}$ is finitely generated over $\mathcal{R'}$, Nakayama's lemma and Theorem \ref{Gul} imply that $\cx_{Q}(M,N)=0$.

Now assume $\f(M,N)<\infty$ and write $R=R'/(x)$ where $R'=Q/(f_{1},\dots, f_{c-1})$ and $x=f_{c}$. Note that, since $\pd_{P}(M)<\infty$, the map $Q\twoheadleftarrow P$ implies that $\CI_{Q}(M)<\infty$. Furthermore $\CI_{R'}(M)+(c-1)\leq \CI_{Q}(M)$ \cite[1.2.3]{AGP}. Thus $\CI_{R'}(M)<\infty$. As $R=R'/(x)$ and $R'=Q/(f_{1},\dots, f_{c-1})$, Proposition \ref{lencx} and the argument in the proof of \cite[1.5]{AGP} yield the following inequalities:
$$\cx_{R}(M,N)-1\leq \cx_{R'}(M,N),\;\; \cx_{R'}(M,N)\leq \cx_{Q}(M,N)+c-1=\cx_{R}(M,N)-1.$$
Therefore $\cx_{R'}(M,N)=\cx_{R}(M,N)-1$.
\end{proof}

We denote by $G(R)$ the \textit{Grothendieck group} of finitely generated $R$-modules, that is, the quotient of the free abelian group of all isomorphism classes of finitely generated $R$-modules by the subgroup generated by the relations coming from short exact sequences of finitely generated $R$-modules. We write $[M]$ for the class of $M$ in $G(R)$ and denote by $\overline{G}(R)$ the group $G(R)/\ZZ \cdot [R]$, the reduced Grothendieck group of $R$. We set $G_{\QQ}=G\tensor_{\ZZ}\QQ$ for an abelian group $G$. Some facts about the group $\overline{G}(R)_{\QQ}$ are collected in the next proposition.

\begin{prop}\label{Groth}
Let $R$ be a local ring, and let $N$ be a finitely generated $R$-module. Then $[N]=0$ in $\overline{G}(R)_{\QQ}$ for each one of the following cases:
\begin{enumerate}
\item $N$ has finite length, or $N$ is a syzygy of some finite length $R$-module.
\item $R$ is Artinian.
\item $R$ is a one-dimensional domain.
\item $R$ is a two-dimensional normal domain with torsion class group.
\end{enumerate}
\end{prop}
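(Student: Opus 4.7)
The strategy is to establish each case by reducing to the finite length subcase of (1).

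\emph{Cases (1) and (2).} If $N$ is a syzygy of a finite length module $M$, the defining sequence $0 \to N \to F \to M \to 0$ with $F$ free gives $[N] \equiv -[M]$ in $\overline{G}(R)$, reducing to the finite length subcase. For finite length $N$, a composition series gives $[N] = \ell_R(N)[k]$, so the problem becomes $[k] = 0$ in $\overline{G}(R)_{\QQ}$. If $\dim R = 0$, then $[R] = \ell_R(R)[k]$ settles this, and since every finitely generated module over an Artinian ring has finite length, the same identity also proves (2). If $d := \dim R \geq 1$, I would exploit the Koszul complex $K_\bullet(\underline{x};R)$ on a system of parameters $\underline{x} = x_1,\dots,x_d$: it is a bounded complex of free modules with finite length homology, so in $G(R)$,
\[
0 = \sum_{i=0}^d (-1)^i \binom{d}{i}[R] = \sum_i (-1)^i [H_i(\underline{x};R)] = \Bigl(\sum_i (-1)^i \ell_R(H_i(\underline{x};R))\Bigr)[k].
\]
The bracketed sum equals the Hilbert-Samuel multiplicity $e(\underline{x};R) > 0$ by the Auslander-Buchsbaum-Serre identity, so $[k]$ is torsion in $G(R)$ and vanishes in $G(R)_{\QQ}$.

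\emph{Case (3).} Decompose $[N] = [\tau(N)] + [N/\tau(N)]$ where $\tau(N)$ is the torsion submodule. Over a one-dimensional domain $\tau(N)$ has $0$-dimensional support, hence finite length. For the torsion-free quotient of rank $r$, clearing denominators produces an embedding into $R^r$ with finite length cokernel $C$, giving $[N/\tau(N)] = r[R] - [C]$. All finite length pieces vanish in $\overline{G}(R)_{\QQ}$ by (1).

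\emph{Case (4).} Now $R$ is a two-dimensional normal domain with $\Cl(R)$ torsion. Split $[N] = [\tau(N)] + [N/\tau(N)]$ again. A prime filtration of $\tau(N)$ (whose support has dimension $\leq 1$) has quotients $R/\mathfrak{p}$ with $\mathfrak{p}$ either $\mathfrak{m}$ (handled by (1)) or height one. For a height-one $\mathfrak{p}$, the torsion hypothesis gives $f \in R$ with $\mathfrak{p}^{(n)} = (f)$ for some $n \geq 1$; the Koszul complex on the nonzerodivisor $f$ gives $[R/(f)] = 0$ in $G(R)$, while the symbolic-power filtration of $R/\mathfrak{p}^{(n)}$ has $n$ subquotients $\mathfrak{p}^{(i-1)}/\mathfrak{p}^{(i)}$, each a rank-one torsion-free $R/\mathfrak{p}$-module at the generic point of $V(\mathfrak{p})$. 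Applying the clearing-denominators argument of (3) to the one-dimensional domain $R/\mathfrak{p}$ shows each such subquotient equals $[R/\mathfrak{p}]$ in $G(R)$ modulo a finite length class; summing then forces $n[R/\mathfrak{p}] \equiv 0$ in $\overline{G}(R)_{\QQ}$, hence $[R/\mathfrak{p}] \equiv 0$. For the torsion-free quotient, Bourbaki's theorem supplies $0 \to R^{r-1} \to N/\tau(N) \to I \to 0$ with $I$ a nonzero ideal, so $[N/\tau(N)] \equiv [I] = [R] - [R/I] \equiv 0$ since $R/I$ is torsion. The most delicate step is this height-one identification, which requires controlling the possible $\mathfrak{m}$-torsion in $\mathfrak{p}^{(i-1)}/\mathfrak{p}^{(i)}$ via localization and a final appeal to (1).
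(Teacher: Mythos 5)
Your proof is correct, but it takes a genuinely different route from the paper's in two places. For the finite length case underlying (1) and (2), the paper argues more cheaply: it picks a prime $p$ with $\dim(R/p)=1$ and $x\notin p$, and the sequence $0\to R/p\xrightarrow{\ x\ }R/p\to R/(p+(x))\to 0$ exhibits a nonzero finite length module whose class is already $0$ in $G(R)$, so $[k]$ is torsion; your Koszul-complex argument reaches the same conclusion but leans on the nontrivial Serre--Auslander--Buchsbaum identity $\chi(\underline{x};R)=e(\underline{x};R)>0$. Case (3) is essentially the paper's argument (a generically split comparison with a free module, with torsion hence finite length error terms). The real divergence is in (4): the paper establishes the isomorphism $\overline{G}(R)_{\QQ}\cong\Cl(R)_{\QQ}$ via Bass's localization exact sequence and concludes $\overline{G}(R)_{\QQ}=0$ outright, whereas you re-derive the needed vanishing directly -- prime filtrations of torsion modules, principality of $\mathfrak{p}^{(n)}$ when the class of $\mathfrak{p}$ is torsion, the filtration by symbolic powers, and a Bourbaki sequence for the torsion-free quotient (Bourbaki is even avoidable: clearing denominators embeds the torsion-free quotient into a free module of the same rank with torsion cokernel, as in (3)). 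Your flagged ``delicate step'' is in fact harmless: each $\mathfrak{p}^{(i-1)}/\mathfrak{p}^{(i)}$ is honestly torsion-free over $R/\mathfrak{p}$ (if $s\notin\mathfrak{p}$ and $sx\in\mathfrak{p}^{(i)}$ then $x\in\mathfrak{p}^{i}R_{\mathfrak{p}}\cap R=\mathfrak{p}^{(i)}$), and even without checking this the $\mathfrak{m}$-torsion is finite length and absorbed by (1), exactly as you indicate. The trade-off: the paper's K-theoretic route is more structural and gives the full computation $\overline{G}(R)_{\QQ}\cong\Cl(R)_{\QQ}$ (so torsion class group is also necessary for the group to vanish, which matters for Remark \ref{Grothrmk}), while your argument is more elementary and self-contained, proving only the vanishing of the classes you need.
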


\begin{proof}
Let $k$ denote the residue field of $R$. Assume $N$ has finite length. We claim $[N]=0$ in $\overline{G}(R)_{\QQ}$. Note that $[N]=l \cdot [k]$ where $l=\len_{R}(N)$. Therefore it suffices to prove $[X]=0$ in $\overline{G}(R)$ for some finite length $R$-module $X$. If $\dim R=0$, then there is nothing to prove as we kill the class $[R]$ of $R$. Suppose now $\dim R>0$. Choose a prime ideal $p$ and an element $x$ of $R$ such that $x\notin p$ and $\dim(R/p)=1$. Then the short exact sequence $\displaystyle{0 \rightarrow R/p \stackrel{x}{\rightarrow} R/p \rightarrow R/(p+x) \rightarrow 0}$ implies that $[R/(P+(x))] =0$. This proves the claim. Therefore $(1)$ and $(2)$ follow.

Suppose now $R$ is a domain. Then there is an exact sequence $0\rightarrow K \rightarrow  R^{(t)} \rightarrow N \rightarrow C \rightarrow 0$ where $K$ and $C$ are torsion $R$-modules. If $\dim(R)=1$, then $K$ and $C$ have finite length, and hence $[N]=0$ in $\overline{G}(R)_{\QQ}$. This proves $(3)$.

Next assume that $R$ is a two-dimensional normal domain. Then $\overline{G}(R)_{\QQ}\cong\Cl(R)_{\QQ}$ where $\Cl(R)$ is the class group of $R$. As $\Cl(R)$ is torsion, this implies $\overline{G}(R)_{\QQ}=0$ and hence proves $(4)$. For the reader's convenience we will sketch a proof for the isomorphism above: Since $\dim(R)=2$, there is a well-defined map $\alpha:\Cl(R) \to \overline{G}(R)_{\QQ}$ given by $\displaystyle{\alpha\left([p]\right)=[R/p]}$ for height one prime ideals $p$ of $R$. The maps $\gamma$ and $\delta$ in the localization exact sequence \cite[6.2]{Bass} $$F^{\times} \stackrel{\gamma}{\longrightarrow} G(\tr(R)) \longrightarrow G(R) \stackrel{\delta}{\longrightarrow} \ZZ \longrightarrow 0 $$
are defined as $\gamma\left(a/b\right)=[R/a]-[R/b]$ and $\delta\left([M]\right)=\dim_F(M\otimes_{R}F)$. Here $F$ is the field of fractions of $R$ and $G(\tr(R))$ is the Grothendieck group of finitely generated torsion $R$-modules. This shows that $\overline{G}(R)$ is isomorphic to the free abelian group on finitely generated torsion $R$-modules modulo the classes of the form $[R/x]$ where $x\in R-\{0\}$, and the relations coming from short exact sequences of torsion $R$-modules. Hence one has a well-defined map $\beta:\overline{G}(R) \to \Cl(R)$, where $\displaystyle{\beta\left([M]\right)=\sum \lambda_{R_{p}}(M_{p})[p]}$ with the sum is taken over all height one prime ideals $p$ of $R$. Therefore the isomorphism $\overline{G}(R)_{\QQ}\cong\Cl(R)_{\QQ}$ follows from the fact that $\alpha\otimes_{\ZZ}\QQ$ and $\beta\otimes_{\ZZ}\QQ$ are inverses to each other.
\end{proof}

\begin{rmk}\label{Grothrmk} The condition (4) of Proposition \ref{Groth} is quite subtle. Let $R$ be an excellent two-dimensional normal domain. For the applications in this paper, one can make suitable flat extensions to assume that $R$ is complete and the residue field $k$ of $R$ is algebraically closed. Suppose $k$ is of characteristic zero. Then $\Cl(R)$ is torsion if and only if $R$ is a rational singularity; This result was explained to us by S. D. Cutkosky. (cf. \cite[17.4]{Lip} and the corollary after \cite[Theorem 4]{Cu}).

In positive characteristic, by the non-trivial results in \cite[Theorem 4]{Cu} and \cite[4.5]{Go}, the class of rings satisfying (4) include {\it all} two-dimensional complete normal domains such that $k$ is either finite or is the algebraic closure of a finite field.
\end{rmk}

\section{Asymptotic behavior of $\beta_i^R(M,N)$ and the generalized Herbrand function}\label{eta}

In this section we will adapt the arguments of \cite{Da2} and define an asymptotic function associated to $\Ext^{i}_{R}(M,N)$ for a pair of finitely generated $R$-modules $(M,N)$. This function can be viewed as a natural generalization of the notion of Herbrand difference, defined by Buchweitz.

Recall from Section \ref{Notations} that if  the length of the modules $\Ext^i_R(M,N)$ is finite for all $i\gg 0$, then we denote by $\f(M,N)$ the set $\textnormal{inf}\{s:\len_{R}(\Ext^i_R(M,N)<\infty \textnormal{ for all } i\geq s \}$, where $\len_{R}(X)$ denotes the length of an $R$-module $X$.
We also write $\beta_i^R(M,N) = \len_{R}(\Ext^i_R(M,N))$ for all $i\geq \f(M,N)$. We now state a slightly modified version of Buchweitz's definition for possibly non-maximal Cohen-Macaulay modules (for more details, see Section 10.2 of \cite{Bu}).

\begin{dfn}\label{HerBu}(\cite{Bu})
Let $R$ be a local hypersurface with an isolated singularity, i.e., $R_p$ is regular for all non-maximal prime ideals $p$ of $R$. For a pair of finitely generated $R$-modules $(M,N)$, the sequence of modules $\{\Ext_R^i(M,N)\}$ is periodic of period two and has finite length for all $i> \depth R -\depth M$. The Herbrand difference $\h^{R}(M,N)$ of $(M,N)$ is defined as:
$$\h^{R}(M,N) = \beta_n^R(M,N) - \beta_{n-1}^R(M,N)$$
where $n$ is any even number such that $n>\depth R-\depth M+1$.
\end{dfn}

The Herbrand difference is relevant for proving results for the vanishing pattern of $\Ext$ modules because of the following simple observation: Suppose that $\h^{R}(M,N)=0$. If $\Ext_R^n(M,N)=0$ for some $n>\depth R-\depth M+1$, then $\Ext_R^i(M,N)=0$ for all $i\geq n$. To generalize this function, we first prove that the numbers $\beta_i^R(M,N)$ share similar properties as the Betti numbers of the module $M$ (cf. also \cite[9.2.1]{Av1}).

\begin{prop}\label{beta1}
Let $R$ be a ring such that $R = Q/(\underline{f})$ where $Q$ is a local ring and $\underline{f}=f_1,...,f_r$ is a regular sequence of $Q$. Let $M$ and $N$ be finitely generated $R$-modules. Assume $\f(M,N)<\infty$ and $\Ext^i_Q(M,N)=0$ for all $i\gg0$ (which is automatic if $\pd_{Q}(M)<\infty$). Set
$$
\displaystyle{P_{M,N}^R(t) = \sum_{i=\f(M,N)}^{\infty}\beta_i^R(M,N)t^i}
$$

Then the following hold:

\begin{enumerate}
\item There is a polynomial $p(t) \in \mathbb{Z}[t]$ with $p(\pm 1)\neq 0$, such that $\displaystyle{P_{M,N}^R(t)  = \frac{p(t)}{(1-t)^c(1+t)^d}}$.\\
\item $\displaystyle{ \beta_i^R(M,N) = \frac{m_0}{(c-1)!} i^{c-1} + (-1)^i\frac{n_0}{(d-1)!} i^{d-1}+ q_{(-1)^i}(i)}$
for all $i\gg 0$, where $m_0$ is a non-negative rational  number and $g_{\pm}(t)\in \mathbb{Q}[t]$ are polynomials of degrees $< \max\{c,d\}-1$.\\
\item $d \leq c = \cx_R(M,N) \leq r$.\\
\end{enumerate}
\end{prop}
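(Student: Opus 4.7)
The plan is to realize the generating function $P_{M,N}^R(t)$ as the Hilbert series of a finitely generated graded module over a polynomial ring, and then extract the claimed asymptotics by partial fractions. By Theorem \ref{Gul}, $E = \Ext^{\ast}_R(M,N)$ is a finitely generated graded module over $\mathcal{S} = R[\chi_1,\ldots,\chi_r]$ with each $\chi_j$ in degree $2$. Exactly as in the proof of Proposition \ref{lencx}, the assumptions $\f(M,N) < \infty$ and $\Ext^{i}_Q(M,N)=0$ for $i\gg 0$, combined with the eventual periodicity of the annihilators $\Ann_R \Ext^{i}_R(M,N)$, produce a power $\mathfrak{m}^h$ that annihilates $E_i$ for all $i \geq s$, some $s \geq \f(M,N)$. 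Dropping the first $s$ terms alters $P_{M,N}^R(t)$ only by a polynomial; filtering the tail by $\mathfrak{m}^{i}E_{\geq s}/\mathfrak{m}^{i+1}E_{\geq s}$ and invoking additivity of length then identifies $P_{M,N}^R(t)$, up to a polynomial, with the Hilbert series of a finitely generated graded module over the polynomial ring $\mathcal{A} = k[\chi_1,\ldots,\chi_r]$ with each variable in degree $2$.

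The Hilbert--Serre theorem applied to $\mathcal{A}$ then shows this Hilbert series is a rational function whose denominator divides $(1-t^{2})^{r} = (1-t)^r(1+t)^r$. Writing it in lowest terms as $p(t)/\bigl((1-t)^{c}(1+t)^{d}\bigr)$ with $p(\pm 1)\ne 0$, a standard argument using the monic denominator forces $p(t) \in \mathbb{Z}[t]$, and both $c,d \leq r$. The inequality $d \leq c$ then follows from non-negativity of $\beta_i^R(M,N)$: were $d > c$, the partial-fraction expansion would force the dominant asymptotic to be $(-1)^{i}\frac{b_d}{(d-1)!}i^{d-1}$ with $b_d \neq 0$, producing infinitely many negative values of $\beta_i^R(M,N)$, a contradiction. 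This establishes (1).

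For (2), I decompose
\[
P_{M,N}^R(t) \;=\; \sum_{j=1}^{c}\frac{a_j}{(1-t)^{j}} \;+\; \sum_{j=1}^{d}\frac{b_j}{(1+t)^{j}} \;+\; r(t)
\]
and expand using $\frac{1}{(1-t)^{j}} = \sum_i \binom{i+j-1}{j-1}t^i$ together with the analogous formula for $\frac{1}{(1+t)^{j}}$. The terms with $j = c$ and $j = d$ supply the two leading asymptotics, with $m_0 = a_c = p(1)/2^d$ and $n_0 = b_d$, while the remaining terms assemble into polynomials $q_{\pm}(i)$ of degree strictly less than $\max\{c,d\} - 1 = c - 1$. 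Non-negativity of $m_0$ is immediate when $c > d$, since then $m_0\, i^{c-1}/(c-1)!$ is the dominant term of an eventually non-negative sequence; when $c = d$, evaluating at even and odd $i$ yields $m_0 \pm n_0 \geq 0$, so $m_0 \geq |n_0| \geq 0$.

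Finally (3) is short: the bound $c \leq r$ is the Krull-dimension bound for modules over $\mathcal{A}$, and $c = \cx_R(M,N)$ follows from Proposition \ref{lencx} combined with the fact, just established in (2), that the quasi-polynomial growth rate of $\beta_i^R(M,N)$ is exactly $c$. The main technical obstacle is the reduction in the first paragraph: one must verify carefully that the passage from the $\mathcal{S}$-module structure to an $\mathcal{A}$-module structure via the $\mathfrak{m}^h$-filtration preserves the length-valued Hilbert series up to a polynomial, which is precisely where Theorem \ref{Gul} together with the annihilator periodicity from \cite[2.4]{Da2} do the essential work.
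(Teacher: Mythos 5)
Your proposal is correct and follows essentially the same route as the paper: Theorem \ref{Gul} plus the annihilator argument from Proposition \ref{lencx} to get a finitely generated graded module killed by a power of $\mathfrak m$, Hilbert--Serre to obtain a rational series with denominator dividing $(1-t^2)^r$, then partial fractions, positivity of the $\beta_i$'s for $d\le c$ and $m_0\ge 0$, and Proposition \ref{lencx} for $c=\cx_R(M,N)$. The only (harmless) deviation is that you filter by powers of $\mathfrak m$ to work over $k[\chi_1,\dots,\chi_r]$, whereas the paper applies Hilbert--Serre directly over $(R/\mathfrak m^h)[\chi_1,\dots,\chi_r]$ with length in place of $k$-dimension.
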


\begin{proof}\hspace{0.01in}
\begin{enumerate}
\item Set $\displaystyle{\xi=\bigoplus^{\infty}_{i=\f(M,N)}\Ext^i_R(M,N)}$. Let $\mathfrak m$ be the unique maximal ideal of $R$. Then Theorem \ref{Gul} and the proof of Proposition \ref{lencx} show that there exists a positive integer $h$ such that $\xi$ is a finitely generated graded module over the ring $\mathcal{T}=\displaystyle{(R/\mathfrak m^{h})[\chi_{1}, \chi_{2}, \ldots, \chi_{r}]}$, where each $\chi_i$ has degree $2$. Therefore Hilbert-Serre Theorem applies to the module $\xi$ over $\mathcal{T}$. This shows that $\displaystyle{P_{M,N}^R(t)=\frac{h(t)}{(1-t^2)^r}}$ for some polynomial $h(t) \in \mathbb{Z}[t]$. Now, cancelling the powers of $1-t$ and $1+t$, one can find a polynomial $p(t) \in \mathbb{Z}[t]$ such that $\displaystyle{P_{M,N}^R(t)  = \frac{p(t)}{(1-t)^c(1+t)^d}}$ where $p(\pm 1)\neq 0$.\\

\item  From part (1), we can write :
$$
\sum_{i=\f(M,N)}^{\infty}\beta_i^R(M,N)t^i= \frac{p(t)}{(1-t)^c(1+t)^d} =\sum_{l=0}^{c-1}{\frac{m_l}{(1-t)^{c-l}}} +
                                         \sum_{l=0}^{d-1}{\frac{n_l}{(1+t)^{d-l}}} + q(t)
$$
Here $q(t) \in \mathbb{Z}[t]$.
Then, by comparing coefficients from both sides, we get the desired formula for $\beta_i^R(M,N)$.
Since $\beta_i^R(M,N) \geq 0$, $m_0$ must be a non-negative rational number.\\
\item
That $c \leq r$ is obvious by the proof of (1). Since the sign of $\beta_i^R(M,N)$ for odd $i$ is positive only if
$c \geq d$, the first inequality is also clear. The size of $\beta_i^R(M,N)$ behaves like a polynomial
of degree $\max\{c,d\}-1= c-1$. Therefore, by Proposition \ref{lencx}, we see that $\cx_R(M,N) = c$.
\end{enumerate}
\end{proof}

\begin{dfn}\label{function}
Let $R$ be a local ring, and let $M$ and $N$ be finitely generated $R$-modules. Assume that $\f(M,N)<\infty$. Then, for a non-negative integer $e$, $\h_e^R(M,N)$ is defined as follows:
\begin{equation}
\begin{split}
 \h_e^R(M,N) = \lim_{n\to\infty} \frac{\displaystyle{\sum\limits^{n}_{i=\f(M,N)}(-1)^i \beta_i^R(M,N)}}{\displaystyle{n^e}}
\end{split}\notag{}
\end{equation}
\end{dfn}

The behavior of $\beta_i^R(M,N)$, proved in Proposition \ref{beta1}, shows that the function $\h^{R}_{\bullet}(M,N)$ behaves quite well:

\begin{thm}\label{beta2}
Let $R$ be a ring such that $R = Q/(\underline{f})$ where $Q$ is a local ring and
$\underline{f}=f_1,...,f_r$ is a regular sequence of $Q$. Let $M$ and $N$ be finitely generated $R$-modules. Assume $\f(M,N)<\infty$ and $\Ext^i_Q(M,N)=0$ for all $i\gg 0$. Set $c = \cx_R(M,N)$.

\begin{enumerate}
\vspace{0.1in}

\item If $e$ is an integer such that $e\geq c$, then $\h_e^R(M,N)$ is finite. Moreover, if $e>c$, then $\h_e^R(M,N)=0$.
\vspace{0.1in}

\item (Biadditivity)
\vspace{0.1in}

\noindent (i) Let $\displaystyle{0 \to M_1 \to M_2 \to M_3 \to 0}$ be an exact sequences of finitely generated $R$-modules. Assume $\f(M_j,N)<\infty$ and $\Ext^i_Q(M_j,N)=0$ for all $i\gg 0$ and for all $j$. Assume further that $e$ is an integer such that $e\geq \cx_{R}(M_j,N)$ for all $j$. If $e \geq 1$, then $$\h_e^R(M_2,N) = \h_e^R(M_1,N) + \h_e^R(M_3,N).$$ Moreover, if $e=0$ and $\len_{R}(M\tensor_RN)<\infty$, then $$\h_0^R(M_2,N) = \h_0^R(M_1,N) + \h_0^R(M_3,N).$$
(ii) Let $\displaystyle{0 \to N_1 \to N_2 \to N_3 \to 0}$ be an exact sequence of finitely generated $R$-modules. Assume $\f(M,N_{j})<\infty$ and $\Ext^i_Q(M,N_{j})=0$ for all $i\gg 0$ and for all $j$. Assume further that $e$ is an integer such that $e\geq \cx_{R}(M,N_{j})$ for all $j$. If $e \geq 1$, then $$\h_e^R(M,N_{2}) = \h_e^R(M,N_{1}) + \h_e^R(M,N_{3}).$$ Moreover, if $e=0$ and $\len_{R}(M\tensor_RN)<\infty$, then $$\h_0^R(M,N_{2}) = \h_0^R(M,N_{1}) + \h_0^R(M,N_{3}).$$

\item (Change of rings)
\vspace{0.1in}

\noindent Suppose that $r\geq 1$ and set $R'=Q/(f_1,...,f_{r-1})$. Let $e$ be a positive integer such that $e\geq c$. Assume $\cx_{R'}(M,N)\leq e-1$.
If $e\ge 2$, or $e=1$ and $\len_{R}(M\tensor_RN)<\infty$, then $\displaystyle{ 2 \cdot \h_e^{R}(M,N) = \h_{e-1}^{R'}(M,N)}$.\\
\end{enumerate}
\end{thm}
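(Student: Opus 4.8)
The plan is to extract everything from the explicit description of $\beta_i^R(M,N)$ in Proposition \ref{beta1} together with the two long exact sequences at our disposal: the $\Ext$-sequence of a short exact sequence in each variable, and the change of rings sequence of Proposition \ref{1.2}. For part (1) I would work directly from the asymptotic expansion in Proposition \ref{beta1}(2). Writing $\beta_i^R(M,N)=\frac{m_0}{(c-1)!}i^{c-1}+(-1)^i\frac{n_0}{(d-1)!}i^{d-1}+q_{(-1)^i}(i)$ and multiplying by $(-1)^i$ turns the first (non-alternating) term into an alternating one and the second into a genuine non-alternating term. Using the elementary fact that $\sum_{i\le n}(-1)^iP(i)=(-1)^n\widetilde P(n)+\text{const}$ with $\deg\widetilde P=\deg P$, while $\sum_{i\le n}P(i)$ is a polynomial of degree $\deg P+1$, one gets $\sum_{i=\f(M,N)}^{n}(-1)^i\beta_i^R(M,N)=\frac{n_0}{d!}n^{d}+(-1)^nO(n^{c-1})+(\text{lower order})$. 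Since $d\le c$, the numerator has degree at most $c$, so dividing by $n^e$ gives a finite limit for $e\ge c$ and $0$ for $e>c$. Equivalently one may package this with generating functions: the partial-sum series is $\frac{1}{1-t}P_{M,N}^R(-t)=\frac{p(-t)}{(1+t)^c(1-t)^{d+1}}$, whose pole of order $d+1$ at $t=1$ (nonzero as $p(-1)\neq0$) governs the polynomial growth and whose pole of order $c$ at $t=-1$ contributes only the oscillating lower-order part.

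For the biadditivity in part (2)(i) I would feed $0\to M_1\to M_2\to M_3\to 0$ into the long exact sequence of $\Ext(-,N)$ and chase lengths. Splitting it into short exact pieces at the images of the connecting maps $\partial_i\colon\Ext^i_R(M_1,N)\to\Ext^{i+1}_R(M_3,N)$ and setting $W_i=\len_R(\im\partial_i)$, the three exactness relations collapse to $\beta_i^R(M_2,N)-\beta_i^R(M_1,N)-\beta_i^R(M_3,N)=-(W_i+W_{i-1})$ for $i\gg0$. The alternating sum of the right-hand side telescopes to the single boundary term $-(-1)^nW_n$ up to a constant, and since $W_n\le\min\{\beta_n^R(M_1,N),\beta_{n+1}^R(M_3,N)\}=O(n^{e-1})$ under the hypothesis $e\ge\cx_R(M_j,N)$, dividing by $n^e$ and letting $n\to\infty$ kills it whenever $e\ge1$. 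For $e=0$ the hypothesis forces $\cx_R(M_j,N)=0$, so the $\Ext$ modules vanish for $i\gg0$ and the alternating sums are finite; the condition $\len_R(M\tensor_R N)<\infty$ guarantees $\f(M_j,N)=0$, so the sum runs from $i=0$ and the initial boundary term $W_{-1}$ vanishes, giving exact additivity. Part (2)(ii) is identical, using the long exact sequence of $\Ext(M,-)$ with connecting maps $\Ext^i_R(M,N_3)\to\Ext^{i+1}_R(M,N_1)$.

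For the change of rings in part (3) I identify $R=R'/(x)$ with $x=f_r$ a non-zerodivisor on $R'$ and apply Proposition \ref{1.2} with $Q$ replaced by $R'$. Writing $A_i=\beta_i^R(M,N)$, $A_i'=\beta_i^{R'}(M,N)$, and $X_i=\len_R(\im\chi)$ for the cohomology operator $\chi=\chi_r\colon\Ext^{i-1}_R(M,N)\to\Ext^{i+1}_R(M,N)$, the three exactness relations in that sequence give $A_i'=A_i+A_{i-1}-X_i-X_{i-1}$, i.e.\ $A_i'=B_i+B_{i-1}$ with $B_i=A_i-X_i$; hence $\sum_{i\le n}(-1)^iA_i'$ telescopes to $(-1)^nB_n$ up to a constant. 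The key estimate is that $\len\ker\chi$ and $\len\coker\chi$ are each bounded by a length of an $\Ext_{R'}$-module, hence are $O(n^{\cx_{R'}(M,N)-1})=O(n^{e-2})$; therefore $X_i=A_{i\pm1}+O(n^{e-2})$ and $B_i=A_i-A_{i+1}+O(n^{e-2})$. Substituting the expansion of $A_i$, the non-alternating leading term drops two degrees under the first difference and becomes negligible after division by $n^{e-1}$, while the alternating part $(-1)^i\frac{n_0}{(d-1)!}i^{d-1}$ survives and, crucially, is \emph{doubled}, because $X_i\approx A_{i-1}$ contributes a second copy of the same sign. Comparing the resulting leading coefficient of $\sum_{i\le n}(-1)^iA_i'/n^{e-1}$ with that of $\h_e^R$ computed in part (1) produces the factor $2$.

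The main obstacle is the bookkeeping in part (3): one must show that every genuinely oscillating contribution (those retaining a factor $(-1)^n$ after summation) is of strictly smaller order than $n^{e-1}$ and hence does not obstruct existence of the limit, and that the surviving non-oscillating term has exactly the predicted leading coefficient. This is cleanest to organize through the generating-function identity $P_{M,N}^{R'}(t)=(1+t)\bigl(P_{M,N}^R(t)-\mathcal X(t)\bigr)$, where $\mathcal X(t)=\sum X_it^i\sim t\,P_{M,N}^R(t)$ near $t=-1$: this simultaneously exhibits the drop in pole order at $t=-1$ (matching the shift $e\mapsto e-1$) and the doubling of the leading Laurent coefficient there. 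One must also track carefully the finite-length hypotheses, the possibly distinct values of $\f$, and—in the boundary case $e=1$—the role of $\len_R(M\tensor_R N)<\infty$ in anchoring the sum so that the constant boundary terms do not contribute.
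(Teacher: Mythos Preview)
Your approach is essentially the same as the paper's: part (1) via the asymptotic expansion of Proposition \ref{beta1}(2), parts (2) and (3) via truncating the relevant long exact sequences, bounding the boundary terms by $O(n^{e-1})$ (respectively $O(n^{e-2})$), and substituting the expansion to identify the surviving leading term. Your telescoping bookkeeping through $W_i$ and $X_i=\len(\im\chi)$ and the optional generating-function repackaging are equivalent reformulations of the paper's direct alternating-sum-of-lengths computation on the truncated sequences; in particular, your identity $(-1)^nB_n=(-1)^n(A_n-A_{n+1})+O(n^{e-2})$ is exactly the paper's equation (\ref{beta2}.9), and the ``doubling'' you isolate is the paper's passage from (\ref{beta2}.9)--(\ref{beta2}.10).
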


\begin{proof}
Let $n$ and $h$ be integers such that $n>h$. Set $g_{M,N}^R(h,n) = \sum_{i=n}^{h} (-1)^i \beta_i^R(M,N))$. Assume $e\geq 1$. Then, for
a fixed $h$, it is clear that:
$$ \h_{e}^R(M,N) = \lim_{n\to\infty} \frac{g_{M,N}^R(h,n)}{n^e}$$
(1)
If $e=0$, then $c=0$ and hence $\Ext^i_R(M,N)=0$ for all $i\gg0$. Thus $\h_e^{R}(M,N)$ is finite. Assume now $e\geq 1$. We choose a sufficiently large integer $h$ so that the formula for $\beta_i^R(M,N)$ in Proposition \ref{beta1}(2) is true for all $i \ge h$. Then,
\begin{equation}\label{eq1}
\begin{split}
g_{M,N}^R(h,n) & = \sum_{i=h}^{n} (-1)^i \beta_i^R(M,N)\\
& = \frac{m_0}{(c-1)!}\sum_{i=h}^{n}(-1)^i i^{c-1}
                     + \frac{n_0}{(d-1)!}\sum_{i=h}^{n}i^{d-1}
                     +\sum_{i=h}^{n}(-1)^i g_{(-1)^i}(i)
\end{split}\tag{\ref{beta2}.1}
\end{equation}
Note that $\displaystyle{\sum_{i=h}^{n}(-1)^i i^{c-1}}$ and $\displaystyle{\sum_{i=h}^{n}(-1)^i g_{(-1)^i}(i)}$ are polynomials in $n$ of order $c-1$ and at most $c-2$, respectively. Since $e\geq c$, it follows from (\ref{eq1}) that:

\begin{equation}\label{eq2}
\h_{e}^R(M,N) =\lim_{n\to\infty} \frac{g_{M,N}^R(h,n)}{n^e}= \lim_{n\to\infty} \frac{n_0}{(d-1)!} \cdot \frac{\sum_{i=h}^{n}i^{d-1}}{n^e}
\tag{\ref{beta2}.2}
\end{equation}
Using (\ref{eq2}) and the equality $\displaystyle{ \sum_{i=h}^{n} i^{d-1}=\frac{n^{d}}{d}+}$ lower order terms, we have:
\begin{equation}\label{eq3}
\h_{e}^R(M,N) =\lim_{n\to\infty} \frac{n_0}{d!}n^{d-e}
\tag{\ref{beta2}.3}
\end{equation}
The claim now follows from the fact that $d-e$ is a non-negative integer (see Proposition \ref{beta1}(3)).\\\\
(2)
It is enough to prove (i) since (ii) follows in an identical manner. Assume $e\geq \cx_{R}(M_j,N)$ for each $j$. The short exact sequence $0 \to M_1 \to M_2 \to M_3 \to 0$ gives rise to the following long exact sequence
\begin{equation}\label{eq4}
\dots \to \Ext^i(M_3,N) \to \Ext^i(M_2,N) \to \Ext^i(M_1,N) \to \Ext^{i+1}(M_3,N) \to \dots
\tag{\ref{beta2}.4}
\end{equation}
We truncate (\ref{eq4}) and obtain the exact sequence
\begin{equation} \label{eq5}
\begin{split}
0 \to B_h \to \Ext^h(M_3,N) \to \Ext^h(M_2,N) \to \Ext^h(M_1,N) \to   \\
\dots \to \Ext^n(M_3,N) \to \Ext^n(M_2,N) \to \Ext^n(M_1,N) \to C_n \to 0,
\end{split}
\tag{\ref{beta2}.5}
\end{equation}
where $n$ and $h$ are integers such that $n > h > \f(M_j,N)$ for each $j$.
Taking the alternating sum of the lengths of the modules in (\ref{eq5}), we obtain
\begin{equation} \label{eq6}
g^{R}_{M_1,N}(h,n) - g^{R}_{M_2,N}(h,n) + g^{R}_{M_3,N}(h,n) = \pm \len_{R}(B_h) \pm \len_{R}(C_n).
\tag{\ref{beta2}.6}
\end{equation}
Since $e\geq \cx_{R}(M_1,N)$, it follows from (\ref{eq5}) that $\len_{R}(C_n)\leq \beta_n^R(M_{1},N)\leq A \cdot n^{e-1}$ for some real number $A$ and all $n\gg 0$. As $h$ is fixed, (\ref{eq3}) and (\ref{eq6}) give the equality we seek:
\begin{equation*}
\begin{split}
\h_e^R(M_1,N)-\h_e^R(M_2,N) + \h_e^R(M_3,N) & =
\lim_{n\to\infty} \frac{g_{M_{1},N}^R(h,n)}{n^e}-
\lim_{n\to\infty} \frac{g_{M_{2},N}^R(h,n)}{n^e}+
\lim_{n\to\infty} \frac{g_{M_{3},N}^R(h,n)}{n^e}\\
&=\lim_{n\to\infty} \frac{g^{R}_{M_1,N}(h,n) - g^{R}_{M_2,N}(h,n) + g^{R}_{M_3,N}(h,n)}{n^{e}} \\
& =\lim_{n\to\infty} \frac{\pm \len_{R}(B_h) \pm \len_{R}(C_n) }{n^{e}}=0.
\end{split}
\end{equation*}
Suppose now $e=0$ and $\len_{R}(M\tensor_RN)<\infty$. Then $\cx_{R}(M_{j},N)=0$, that is, $\Ext^i_R(M_j,N)=0$ for all $i\gg 0$ and all $j$. Moreover $\len_{R}(\Ext_{R}^i(M,N))<\infty$ for all $i$ and $j$. Therefore, taking the alternating sum of the lengths of $\Ext_{R}^i(M,N)$ in (\ref{eq4}), we conclude that $\h_0^R(M_2,N) = \h_0^R(M_1,N) + \h_0^R(M_3,N).$\\\\
(3) Write $R=R'/(x)$, where $R'=Q/(f_1,...,f_{r-1})$ and $x=f_{r}$. Then Proposition \ref{longexact} gives the following long exact sequence:
\begin{equation} \label{eq7}
... \to \Ext^{i}_R(M,N) \to \Ext_{R'}^i(M,N) \to \Ext^{i-1}_R(M,N) \to \Ext_R^{i+1}(M,N) \to...
\tag{\ref{beta2}.7}
\end{equation}
Suppose now $e=1$ and $\len_{R}(M\tensor_RN)<\infty$. Since $\Ext^{i}_{R'}(M,N)=0$ for all $i\gg 0$, it follows from the exact sequence (\ref{eq7})
that $\displaystyle{ 2 \cdot \h_1^{R}(M,N) = \h_{0}^{R'}(M,N)}$. Assume now $e\geq 2$. We truncate (\ref{eq7}) and obtain the exact sequence
\begin{equation}\label{eq8}
\begin{split}
0 \to B_h \to \Ext^{h}_R(M,N) \to \Ext^{h}_{R'}(M,N) \to \Ext^{h-1}_R(M,N)
\to  \dots                                                      \\
\to \Ext^{n}_R(M,N) \to \Ext^{n}_{R'}(M,N) \to \Ext^{n-1}_R(M,N) \to \Ext^{n+1}_R(M,N) \to C_n \to 0,
\end{split}
\tag{\ref{beta2}.8}
\end{equation}
where $n$ and $h$ are integers such that $n>h>\f(M,N)$.
Taking the alternating sum of the lengths of the modules in (\ref{eq8}) we get:
\begin{equation}\label{eq9}
g_{M,N}^{R'}(h,n) = (-1)^{n}\beta_n^R(M,N) -(-1)^{n}\beta_{n+1}^{R}(M,N) \pm  \beta^{R}_{h-1}(M,N) \pm \len_{R}(B_h) \pm \len_{R}(C_n).
\tag{\ref{beta2}.9}
\end{equation}
Since $C_{n}$ is a submodule of $\Ext^{R'}_{n+1}(M,N)$ and $\cx_{R'}(M,N)\leq e-1$, $\len_{R}(C_n)\leq \beta^{R'}_{n+1}(M,N)\leq B\cdot n^{e-2}$ for some real number $B$ and all $n\gg 0$. Therefore, using (\ref{eq9}) and the equality in Proposition \ref{beta1}(2), we conclude that
\begin{equation}\label{eq10}
g_{M,N}^{R'}(h,n) = \frac{2 \cdot n^{d-1} \cdot n_0}{(d-1)!}+ f(n) \textnormal{ for all } n\gg 0,
\tag{\ref{beta2}.10}
\end{equation}
where $f(t)\in \ZZ[t]$ is a polynomial of order at most $e-2$. Now (\ref{eq10}) implies
\begin{equation}\label{eq11}
\h^{R'}_{e-1}(M,N)= \lim_{n\to\infty}\frac{g_{M,N}^{R'}(h,n)}{n^{e-1}} = 2d \cdot \lim_{n\to\infty} \frac{n_0}{d!}n^{d-e}
\tag{\ref{beta2}.11}
\end{equation}
Notice, if $d\neq e$, $\displaystyle{\lim_{n\to\infty} \frac{n_0}{d!}n^{d-e}=0}$. Thus (\ref{eq11}) shows that $\displaystyle{\h^{R'}_{e-1}(M,N)= 2e \cdot \frac{n_0}{d!}n^{d-e}}.$ Therefore (\ref{eq3}) gives the equality we seek: $\displaystyle{\h^{R'}_{e-1}(M,N)=2e \cdot \h_{e}^R(M,N)}$.
\end{proof}

\section{Vanishing Results}\label{mainSection}

In this section we prove various vanishing results for $\Ext^i_R(M,N)$ when $M$ has finite complete intersection dimension. Our main tool will be the function $\h^{R}_{\bullet}(M,N)$.

\begin{rmk}\label{alfa1} Let $(R, \mathfrak m)$ be a local ring, and let $M$ and $N$ be finitely generated $R$-modules such that $\CI_{R}(M)<\infty$. Assume that $\f(M,N)<\infty$ and $e$ is an integer such that $e\geq \cx_{R}(M,N)$. Then $\h_e^R(M,N)$ is finite. One can see this as follows: Since $\CI_{R}(M)<\infty$, there exists a quasi-deformation $R \rightarrow S \stackrel{\alpha}{\twoheadleftarrow} P$ such that
$\pd_{P}(M\otimes_{R}S)<\infty$. Let $p$ be a minimal prime of $S/ \mathfrak mS$ and set $q=\alpha^{-1}(p)$. Then
the localized diagram $R \rightarrow S_{p} \twoheadleftarrow P_{q}$ is a quasi-deformation with zero-dimensional closed fiber such that $\pd_{P_{q}}(M\otimes_{R}S_{p})<\infty$ (cf. for example the proof of \cite[2.11]{SW}). Thus we may replace the original quasi-deformation with the localized one and assume that the closed fiber $S/ \mathfrak mS$ is Artinian. Therefore, since $R \rightarrow S$ is flat, $\f(M,N)=\textnormal{f}^{S}_{\textnormal{ext}}(M\otimes_{R}S,N\otimes_{R}S)$ and $\cx_{R}(M,N)=\cx_{S}(M\otimes_{R}S,N\otimes_{R}S)$. Write $S=P/(\underline{f})$ for some regular sequence
$\underline{f}$ of $P$. Then it follows from Theorem \ref{beta2}(1) that $\h^{S}_{e}(M\otimes_{R}S,N\otimes_{R}S)$ is finite. One can now define $\h_e^R(M,N)$ as in Definition \ref{function}; it is a multiple of $\h^{S}_{e}(M\otimes_{R}S,N\otimes_{R}S)$ and hence is finite.
\end{rmk}

\begin{thm} \label{alfa2} Let $R$ be a local ring, and let $M$ and $N$ be finitely generated $R$-modules. Assume $\CI_{R}(M)<\infty$ and $\f(M,N)<\infty$. Let $e$ be an integer such that $e\geq \cx_{R}(M,N)$. Assume further that $\h^{R}_{e}(M,N)=0$. If $\Ext^{n}_{R}(M,N)=\dots=\Ext^{n+e-1}_{R}(M,N)=0$ for some $n>\depth(R)-\depth(M)$, then $\Ext^{i}_{R}(M,N)=0$ for all $i>\depth(R)-\depth(M)$.
\end{thm}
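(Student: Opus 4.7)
The plan is to combine the tools of Section \ref{eta} into a descending induction on $e$, using Lemma \ref{factor} to peel off one element of the regular sequence at each step and Proposition \ref{1.2} to transfer vanishing between the rings $R$ and $R'$ related by $R = R'/(x)$.

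First I would perform standard reductions. Using a faithfully flat extension I may assume the residue field of $R$ is infinite. Next, invoking $\CI_R(M) < \infty$ and following Remark \ref{alfa1} I may replace the original data with a quasi-deformation having Artinian closed fiber, which reduces the problem to the case $R = Q/(\underline{f})$ where $Q$ is a local ring, $\underline{f} = f_1, \ldots, f_r$ is a regular sequence of $Q$, and $\pd_Q(M) < \infty$. All the hypotheses $\f(M,N) < \infty$, $\cx_R(M,N) = c$, the relation $e \geq c$, and $\h^R_e(M,N) = 0$ transfer through these reductions, and the conclusion descends back.

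The main step is induction on $e$. For the inductive step I assume $c \geq 1$ (otherwise we are in the base case treated below) and apply Lemma \ref{factor} to factor $Q \twoheadrightarrow R$ through a local ring $R'$ with $R = R'/(x)$, $x$ a non-zerodivisor of $R'$, $\CI_{R'}(M) < \infty$, and $\cx_{R'}(M,N) = c - 1 \leq e - 1$. By Theorem \ref{beta2}(3) the hypothesis $\h^R_e(M,N) = 0$ passes to $\h^{R'}_{e-1}(M,N) = 0$ (when $e \geq 2$, or when $e = 1$ with $\len_R(M \otimes_R N) < \infty$, a point which must be verified using the consecutive vanishings, or handled with a direct argument in the rank-one case). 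Applying the change-of-rings sequence in Proposition \ref{1.2} to the $e$ vanishings $\Ext^n_R(M,N) = \cdots = \Ext^{n+e-1}_R(M,N) = 0$, the pieces
\[
\Ext^{i+1}_R(M,N) \to \Ext^{i+1}_{R'}(M,N) \to \Ext^i_R(M,N)
\]
produce $e - 1$ consecutive vanishings $\Ext^i_{R'}(M,N) = 0$ for $i = n+1, \ldots, n+e-1$. Because $\depth(R') = \depth(R) + 1$ and $\depth_{R'}(M) = \depth(M)$, the new starting index still satisfies $n+1 > \depth(R') - \depth_{R'}(M)$. The inductive hypothesis applied to $(R', M, N, e - 1)$ then yields $\Ext^i_{R'}(M,N) = 0$ for every $i > \depth(R') - \depth(M)$. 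Feeding this vanishing back into Proposition \ref{1.2} collapses the long exact sequence to isomorphisms $\Ext^{i-1}_R(M,N) \cong \Ext^{i+1}_R(M,N)$ whenever $i \geq \depth(R) - \depth(M) + 2$, i.e., 2-periodicity of $\Ext^i_R(M,N)$ for $i > \depth(R) - \depth(M)$. Coupled with any two of the original consecutive vanishings $\Ext^n_R = \Ext^{n+1}_R = 0$, this gives $\Ext^i_R(M,N) = 0$ throughout the desired range.

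The base case is $e = 0$ (which forces $c = 0$), handled by an auxiliary induction on the length $r$ of the regular sequence. When $r = 0$ the statement is immediate from Auslander--Buchsbaum applied to $\pd_Q(M) < \infty$. For $r \geq 1$, writing $R = R'/(x)$ with $R' = Q/(f_1, \ldots, f_{r-1})$, the induction on $r$ over $R'$ combined with Proposition \ref{1.2} again produces 2-periodicity of $\Ext^i_R(M,N)$ for $i > \depth(R) - \depth(M)$, and since $c = 0$ forces eventual vanishing, the periodicity forces total vanishing in the range. I expect the main obstacle to lie in the borderline case $e = 1$, $c = 1$: here Theorem \ref{beta2}(3) requires $\len_R(M \otimes_R N) < \infty$ in order to drop down to $R'$, so one must either deduce this finite-length hypothesis from the vanishing data (using that $\h^R_1(M,N) = 0$ forces the leading coefficient $n_0$ in Proposition \ref{beta1}(2) to vanish) or argue directly in the spirit of Buchweitz's original Herbrand difference argument on the essentially periodic sequence $\{\beta^R_i(M,N)\}_{i \gg 0}$.
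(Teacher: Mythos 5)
Your proposal follows the paper's own route almost step for step: reduce via Remark \ref{alfa1} (and an infinite residue field) to a deformation situation, apply Lemma \ref{factor}, induct on $e$ using Theorem \ref{beta2}(3) to pass $\h^{R}_{e}=0$ to $\h^{R'}_{e-1}=0$, push the vanishings down with Proposition \ref{1.2}, and recover the full conclusion from $2$-periodicity; the base case $c=0$ is handled in the paper simply by citing \cite{ArY} rather than your auxiliary induction on $r$, but that is a cosmetic difference. The one place you hedge --- the borderline case $e=1$, $c=1$ --- is exactly where the paper departs from the ``apply Theorem \ref{beta2}(3) again'' template, and of your two suggested fixes only the second one is viable. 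The first (deducing $\len_{R}(M\tensor_{R}N)<\infty$ from the vanishing data) cannot work: $\f(M,N)<\infty$ together with the hypotheses says nothing about the length of $M\tensor_{R}N$ (already for maximal Cohen--Macaulay modules over a one-dimensional isolated hypersurface singularity this length is typically infinite while all hypotheses of the theorem can hold), so that hypothesis of Theorem \ref{beta2}(3) with $e=1$ is genuinely unavailable. The paper instead runs your second alternative directly: when $e=1$ Lemma \ref{factor} gives $R=Q/(x)$ with $\cx_{Q}(M,N)=0$, so by \cite{ArY} $\Ext^{i}_{Q}(M,N)=0$ for all $i>\depth(Q)-\depth(M)$, and Proposition \ref{1.2} yields $\Ext^{i}_{R}(M,N)\cong\Ext^{i+2}_{R}(M,N)$ for all $i>\depth(R)-\depth(M)$; since $\f(M,N)<\infty$ the lengths $\beta^{R}_{i}(M,N)$ then alternate between two constants $a$ and $b$ on that range, the defining limit for $\h^{R}_{1}(M,N)$ equals $\pm(a-b)/2$, so $\h^{R}_{1}(M,N)=0$ forces $a=b$, and the single given vanishing kills everything in the range. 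So your outline is correct and coincides with the paper's proof once the $e=1$ case is completed along the lines of your second suggestion; just discard the attempt to secure $\len_{R}(M\tensor_{R}N)<\infty$.
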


\begin{proof} Set $c=\cx_{R}(M,N)$. If $c=0$, then $\Ext^{i}_{R}(M,N)=0$ for all $i\gg 0$, and
hence the result follows from \cite[4.2]{ArY} (cf. also \cite[4.7]{AvBu}). So we assume that $c\geq 1$.
Since $\CI_{R}(M)<\infty$, by Remark \ref{alfa1} and \cite[1.14]{AGP}, one can choose a quasi-deformation $R \rightarrow S \twoheadleftarrow P$ such that $\pd_{P}(M\otimes_{R}S)<\infty$, $S/ \mathfrak mS$ is Artinian and $P$ has infinite residue field. Note that $\h^{R}_{e}(M,N)=0$ if and only if $\h^{S}_{e}(M\otimes_{R}S,N\otimes_{R}S)=0$. Therefore we may assume $R=S$. One can now apply Lemma \ref{factor} to construct the rings $Q$ and $R'$.

We shall proceed by induction on $e$. We already settle the case $c=0$ or $e=0$. So suppose $e=1$. Then $c=1$, $R'=Q$ and $R=Q/(x)$. Since $\Ext^{i}_{Q}(M,N)=0$ for all $i\gg 0$, \cite[4.2]{ArY} shows that $\Ext^{i}_{Q}(M,N)=0$ for all $i>\depth(Q)-\depth(M)$. Thus Proposition \ref{1.2} implies that $\Ext^{i}_{R}(M,N)\cong \Ext^{i+2}_{R}(M,N)$ for all $i>\depth(R)-\depth(M)$. Set $w=\depth(R)-\depth(M)+1$. Since we assume $\f(M,N)<\infty$ there exist integers $a$ and $b$ such that $\beta_{w+2i}^{R}(M,N)=a$ and $\beta_{w+2i+1}^{R}(M,N)=b$ for all $i\geq 0$. Now, since $\h^{R}_{1}(M,N)=0$, we have:
$$\displaystyle{ \lim_{n\to\infty} \frac{(-1)^{w}\cdot a+(-1)^{w+1}\cdot b+(-1)^{w+2}\cdot a+(-1)^{w+3}\cdot b+\dots+(-1)^n\beta_n^R(M,N)}{n}=0} $$
The limit on the left is $(-1)^w(a-b)/2$, so $\beta_{i}^{R}(M,N)=\beta_{i+1}^{R}(M,N)$ for all $i\geq w$. Since $\Ext^{j}_{R}(M,N)=0$ for some integer $j\geq w$, we conclude that $\Ext^{i}_{R}(M,N)=0$ for all $i\geq w$, which is what we want. Assume now $e\geq 2$. Then Theorem \ref{beta2}(3) shows that $\h^{R'}_{e-1}(M,N)=0$. Moreover, by Proposition \ref{1.2}, we have that $\Ext^{n+1}_{R'}(M,N)=\dots=\Ext^{n+e-1}_{R'}(M,N)=0$. Since $\cx_{R'}(M,N)=\cx_{R}(M,N)-1\leq e-1$, the induction hypothesis and \cite[4.2]{ArY} imply that $\Ext^{i}_{R'}(M,N)=0$ for all $i>\depth(R')-\depth(M)$. Hence, using Proposition \ref{1.2} and the fact that $\Ext^{n}_{R}(M,N)=\Ext^{n+1}_{R}(M,N)=0$, we conclude $\Ext^{i}_{R}(M,N)=0$ for all $i\gg 0$. Thus $c=0$ and hence the result follows.
\end{proof}

Let $M$ and $N$ be finitely generated modules over a local ring $R$ such that $\CI_{R}(M)<\infty$. Set $c=\cx_{R}(M)$. If $\Ext^{n}_{R}(M,N)=\dots=\Ext^{n+c}_{R}(M,N)=0$ for some $n>\depth(R)-\depth(M)$, then it follows from \cite[2.6(1)]{Jo2} that $\Ext^{i}_{R}(M,N)$ for all $i>\depth(R)-\depth(M)$ (cf. also \cite[4.7]{AvBu}). Since $\cx_R(M,N)\leq \cx_{R}(M)$, Corollary \ref{alfa3} generalizes this result.

\begin{cor} \label{alfa3} Let $R$ be a local ring, and let $M$ and $N$ be finitely generated $R$-modules. Assume $\CI_{R}(M)<\infty$. Let $e$ be an integer such that $e>\cx_R(M,N)$. If $\Ext^{n}_{R}(M,N)=\dots=\Ext^{n+e-1}_{R}(M,N)=0$ for some $n> \depth(R)-\depth(M)$, then $\Ext^{i}_{R}(M,N)$ for all $i> \depth(R)-\depth(M)$.
\end{cor}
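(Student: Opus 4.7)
The plan is to apply Theorem \ref{alfa2} with the integer $e$ given. Beyond the hypotheses of Corollary \ref{alfa3}, that theorem requires $\f(M,N)<\infty$ and $\h_e^R(M,N)=0$, so the task reduces to verifying these two conditions. The vanishing $\h_e^R(M,N)=0$ is easy: since $e>\cx_R(M,N)$, I pass to a quasi-deformation $R\to S\twoheadleftarrow P$ with Artinian closed fiber as in Remark \ref{alfa1} and write $S=P/(\underline f)$. Theorem \ref{beta2}(1) applied over $S$ gives $\h_e^S(M\tensor_R S,N\tensor_R S)=0$, and since $\h_e^R(M,N)$ is defined as a rational multiple of this quantity in Remark \ref{alfa1}, $\h_e^R(M,N)=0$ as well.

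For $\f(M,N)<\infty$ I argue by induction on $d=\dim R$, proving Corollary \ref{alfa3} itself by induction. The base case $d=0$ is immediate since every finitely generated $R$-module has finite length. For $d>0$, fix any non-maximal prime $p$ with $M_p\neq 0$. Localizing the quasi-deformation at a prime $q$ of $S$ lying over $p$ and its preimage $q'$ in $P$ produces a quasi-deformation $R_p\to S_q\twoheadleftarrow P_{q'}$ witnessing $\CI_{R_p}(M_p)<\infty$, and the standard identity $\CI_R(M)=\pd_P(M\tensor_R S)-r$ (where $r$ is the length of the regular sequence), together with $\pd_{P_{q'}}((M\tensor_R S)_q)\leq \pd_P(M\tensor_R S)$, yields $\CI_{R_p}(M_p)\leq \CI_R(M)$; combined with the Auslander--Buchsbaum-type formula of \cite{AGP} this gives $\depth R_p-\depth M_p\leq \depth R-\depth M<n$. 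The inequality $\nu_{R_p}(X_p)\leq \nu_R(X)$ applied to $X=\Ext^i_R(M,N)$ gives $\cx_{R_p}(M_p,N_p)\leq \cx_R(M,N)<e$, and the $e$ consecutive vanishings of $\Ext^i_R(M,N)$ persist under localization. The inductive hypothesis at $R_p$ then yields $\Ext^i_{R_p}(M_p,N_p)=0$ for every $i>\depth R_p-\depth M_p$, hence for every $i>\depth R-\depth M$. Thus $\Ext^i_R(M,N)$ is supported only at $\mathfrak m$ for such $i$ and has finite length; in particular $\f(M,N)\leq \depth R-\depth M+1<\infty$, and Theorem \ref{alfa2} finishes the proof.

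The main obstacle is the bookkeeping of localization: verifying that complete intersection dimension, complexity, and the depth difference $\depth R-\depth M$ all localize appropriately so that the inductive hypothesis is available at every non-maximal prime. Once those localization properties are isolated, the deduction from Theorem \ref{alfa2} is routine.
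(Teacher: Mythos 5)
Your proposal is correct and follows essentially the same route as the paper: reduce to verifying $\f(M,N)<\infty$, prove that by induction on $\dim(R)$ using localization at non-maximal primes (the paper simply cites $\CI_{R_p}(M_p)\leq \CI_R(M)$ from \cite[1.6]{AGP} rather than re-deriving it), and then conclude via Theorem \ref{beta2}(1) and Theorem \ref{alfa2}. The extra details you supply (localizing the quasi-deformation, the trivial case $M_p=0$, and the multiple relating $\h_e^R$ to $\h_e^S$) are consistent with Remark \ref{alfa1} and do not change the argument.
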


\begin{proof} If $\f(M,N)<\infty$, then Theorem \ref{beta2}(1) shows that $\h^{R}_{e}(M,N)=0$ and hence the result follows from Theorem \ref{alfa2}. Therefore it suffices to prove $\f(M,N)<\infty$. We shall proceed by induction on $\dim(R)$. There is nothing to prove if $\dim(R)=0$, since Theorem \ref{alfa2} applies directly. Thus assume $\dim(R)\geq 1$ and let $p$ be a non-maximal prime ideal of $R$. Since $\CI_{R_{p}}(M_{p})\leq \CI_{R}(M)$ \cite[1.6]{AGP} and $\cx_{R_{p}}(M_{p},N_{p})\leq \cx_{R}(M,N)$, the induction hypothesis implies that $\Ext^{i}_{R_{p}}(M_{p},N_{p})=0$ for all $i> \depth R_p$. Therefore $\f(M,N)<\infty$. This proves the claim.
\end{proof}

\begin{rmk}\label{alfa4} Avramov showed in \cite[9.3.7]{Av2} that the conclusion of Corollary \ref{alfa3} is not true in case $e=\cx_R(M,N)$; there are finitely generated modules $M$ and $N$ over a complete intersection ring $R$ such that $\Ext^{n}_{R}(M,N)=\dots=\Ext^{n+e-1}_{R}(M,N)=0$ for some $n>\depth(R)-\depth(M)$, where $e=\cx_{R}(M)=\cx_{R}(N)=\cx_{R}(M,N)>0$.
\end{rmk}

It was proved in \cite[5.1]{DaV} that over a Cohen-Macaulay ring $R$ that is an isolated singularity, one has $\cx_{R}(M,N)\leq \textnormal{min}\{\cx_{R}(M),\px_{R}(N)\}$. Therefore Proposition \ref{alfa3} gives the following result:

\begin{cor} Let $R$ be a local ring that is either Artinian or a one-dimensional domain. Let $M$ and $N$ be finitely generated $R$-modules. Assume $\CI_{R}(M)<\infty$ and $p=\px_{R}(N)\leq \cx_{R}(M)$. If $\Ext^{n}_{R}(M,N)=\dots=\Ext^{n+p}_{R}(M,N)=0$ for some $n> \depth(R)-\depth(M)$, then $\Ext^{i}_{R}(M,N)=0$ for all $i> \depth(R)-\depth(M)$.
\end{cor}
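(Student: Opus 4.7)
The plan is to reduce the statement directly to Corollary \ref{alfa3} by showing that the hypothesis $p = \px_R(N) \leq \cx_R(M)$ forces $\cx_R(M,N) \leq p$, so that $e = p+1$ strictly exceeds $\cx_R(M,N)$ and gives us exactly the $p+1 = e$ consecutive vanishings we are handed.

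First I would verify that in both cases the ring $R$ falls into the scope of the bound $\cx_R(M,N) \leq \min\{\cx_R(M),\px_R(N)\}$ quoted from \cite[5.1]{DaV}, which requires $R$ to be Cohen--Macaulay and an isolated singularity. If $R$ is Artinian, this is automatic: $R$ is trivially Cohen--Macaulay and its only prime is the maximal ideal, so the isolated singularity condition is vacuous. If $R$ is a one-dimensional domain, then $R$ is Cohen--Macaulay (depth one equals dimension) and the only non-maximal prime is $(0)$, whose localization is the fraction field, hence regular; thus $R$ is again an isolated singularity.

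Next, since the hypotheses of \cite[5.1]{DaV} hold, I would invoke it to obtain
\[
\cx_R(M,N) \;\leq\; \min\{\cx_R(M),\,\px_R(N)\} \;=\; \px_R(N) \;=\; p,
\]
where the middle equality uses the assumption $p = \px_R(N) \leq \cx_R(M)$. Setting $e = p+1$, this says $e > \cx_R(M,N)$. The hypothesis $\Ext^{n}_R(M,N) = \cdots = \Ext^{n+p}_R(M,N) = 0$ then reads as $\Ext^n_R(M,N) = \cdots = \Ext^{n+e-1}_R(M,N) = 0$, with $n > \depth(R) - \depth(M)$ and $\CI_R(M) < \infty$. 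These are exactly the hypotheses of Corollary \ref{alfa3}, which yields $\Ext^i_R(M,N) = 0$ for all $i > \depth(R) - \depth(M)$.

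There is essentially no obstacle beyond confirming the applicability of \cite[5.1]{DaV} in the two listed cases; the entire argument is a one-line reduction once the bound $\cx_R(M,N) \leq \px_R(N)$ is in hand. The only point worth writing carefully is the verification that Artinian local rings and one-dimensional local domains are Cohen--Macaulay isolated singularities, so that the cited bound legitimately applies.
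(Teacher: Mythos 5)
Your proposal is correct and is exactly the paper's argument: the authors quote the bound $\cx_R(M,N)\leq\min\{\cx_R(M),\px_R(N)\}$ from \cite[5.1]{DaV} (valid since an Artinian local ring or a one-dimensional local domain is a Cohen--Macaulay isolated singularity) and then apply Corollary \ref{alfa3} with $e=p+1>\cx_R(M,N)$. Your added verification that the two classes of rings satisfy the hypotheses of \cite[5.1]{DaV} is the only content beyond what the paper states, and it is accurate.
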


\begin{prop}\label{alfa5} Let $R$ be a local ring, and let $M$ and $N$ be finitely generated $R$-modules. Assume $\CI_{R}(M)<\infty$ and $\len_{R}(N) <\infty$. Let $e$ be an integer such that $e\geq \max\{1,\cx(M)\}$. Then $\h^R_e(M,N)=0$.
\end{prop}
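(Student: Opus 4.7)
I would prove this by reducing to $N = k$ via biadditivity and then using iterated change of rings to identify $\h^R_c(M,k)$ with an Euler characteristic of $M$ over a deformation ring, which must vanish. Using Remark~\ref{alfa1} together with Lemma~\ref{factor}, after a flat extension keeping $N$ of finite length and arranging an infinite residue field, I may assume $R = Q/(f_1,\ldots,f_c)$ for a regular sequence $\underline{f}$ in a local ring $Q$, with $\pd_Q M < \infty$ and $c = \cx_R(M)$. If $c = 0$ then $\pd_R M < \infty$ forces $\Ext^i_R(M,N) = 0$ for $i \gg 0$, so $\h^R_e(M,N) = 0$ for every $e \geq 1$; henceforth assume $c \geq 1$.

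Next I would reduce to the case $N = k$ by induction on $\len_R(N)$, applying the biadditivity statement Theorem~\ref{beta2}(2)(ii) to a composition series of $N$. The hypotheses all hold routinely for finite length modules: $\f(M,N_j) < \infty$ since $\Ext^i_R(M,N_j)$ is supported at $\mathfrak m$ hence of finite length; $\Ext^i_Q(M,N_j) = 0$ for $i \gg 0$ from $\pd_Q M < \infty$; and $\cx_R(M,N_j) \leq \cx_R(M) = c \leq e$, because $\Ext^i_R(M,N_j)$ is a subquotient of $\Hom_R(F_i,N_j) = N_j^{\beta_i(M)}$ (for a minimal $R$-free resolution $F_\bullet \to M$), giving $\len_R \Ext^i_R(M,N_j) \leq \len_R(N_j) \cdot \beta_i(M)$.

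For $N = k$ and $e > c$, the vanishing of $\h^R_e(M,k)$ is immediate from Theorem~\ref{beta2}(1). In the critical case $e = c$, I would apply Theorem~\ref{beta2}(3) exactly $c$ times along the tower $R = Q/(f_1,\ldots,f_c) \twoheadleftarrow Q/(f_1,\ldots,f_{c-1}) \twoheadleftarrow \cdots \twoheadleftarrow Q$. At each stage both the exponent and the complexity $\cx_{Q/(f_1,\ldots,f_j)}(M,k) = j$ drop by one, so the hypothesis $\cx_{R'}(M,k) \leq e - 1$ of Theorem~\ref{beta2}(3) is satisfied with equality; the boundary case $e = 1$ at the final stage is permitted because $\len_R(M \otimes_R k) = \nu_R(M) < \infty$. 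The outcome is $2^c\, \h^R_c(M,k) = \h^Q_0(M,k)$, which, since $\pd_Q M < \infty$, equals the Euler characteristic $\chi_Q(M) := \sum_i (-1)^i \dim_k \Ext^i_Q(M,k)$.

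Finally, since $M$ is an $R$-module and $c \geq 1$, the nonzerodivisor $f_1 \in Q$ annihilates $M$; thus $M_{\mathfrak p} = 0$ for every minimal prime $\mathfrak p$ of $Q$. Localizing a minimal $Q$-free resolution $F_\bullet \to M$ at such $\mathfrak p$ exhibits $\chi_Q(M) = \sum_i (-1)^i \operatorname{rank} F_i$ as the Euler characteristic of an exact complex of free $Q_{\mathfrak p}$-modules, which must vanish. The main obstacle I foresee is the careful bookkeeping of the complexities at each stage of the iterated change of rings argument, so that the hypotheses of Theorem~\ref{beta2}(3) remain valid throughout.
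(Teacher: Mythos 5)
Your proposal is correct and follows essentially the same route as the paper's proof: reduce to $N=k$ by biadditivity along a composition series, use Remark \ref{alfa1} and Lemma \ref{factor} to pass to a presentation $R=Q/(f_1,\dots,f_c)$ with $\pd_Q M<\infty$, iterate the change-of-rings formula of Theorem \ref{beta2}(3), and identify the bottom term with an Euler characteristic that vanishes because a nonzerodivisor annihilates $M$. The only (immaterial) differences are that you run the tower all the way down to $Q$, obtaining $2^c\,\h^R_c(M,k)=\chi_Q(M)$, whereas the paper stops at $e=1$ and invokes $\chi_{R'}(M)=0$ via \cite[19.8]{Mat}, which you instead verify directly by localizing at minimal primes.
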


\begin{proof} Note that, if $\cx(M)=0$, then the statement is obvious. Therefore we can assume $\cx(M)\geq 1$.
We only need to check the assertion for the case $N=k$, the residue field of $R$, as $N$ has a finite filtration by copies of $k$.
In view of Theorem \ref{beta2} and Lemma \ref{factor} we can assume $e=1$. Then $\cx(M)=1$. Moreover, by Lemma \ref{factor}, we can write $R=R'/(x)$ where $x$ is a non-zerodivisor of $R'$ and $\pd_{R'}M<\infty$. Now Theorem \ref{beta2}(3) shows that
$$ \h_1^{R}(M,k) = \h_{0}^{R'}(M,k) = \chi_{R'}(M)$$
where $\chi_{R'}(M)$ is the Euler characteristic of $M$ over $R'$. Since $x \in \Ann_{R'}(M)$, $\chi_{R'}(M)=0$ (cf. for example \cite[19.8]{Mat}).
\end{proof}

The following corollary now immediately follows from Theorem \ref{alfa2} and Proposition \ref{alfa5}.

\begin{cor} \label{alfa6} (\cite[3.5]{Be2}) Let $R$ be a local ring, and let $M$ and $N$ be finitely generated $R$-modules. Assume $\CI_{R}(M)<\infty$ and $\len_{R}(N) <\infty$. If $\Ext^{n}_{R}(M,N)=\dots=\Ext^{n+c-1}_{R}(M,N)=0$
for some $n> \depth(R)-\depth(M)$, where $c=\cx_{R}(M)$, then $\Ext^{i}_{R}(M,N)=0$ for all $i>\depth(R)-\depth(M)$.
\end{cor}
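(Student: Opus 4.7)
The plan is to apply Theorem \ref{alfa2} directly with $e = c$, after verifying that its hypotheses are met using Proposition \ref{alfa5}. First, observe that since $\len_R(N) < \infty$, the module $\Ext^i_R(M,N)$ is a module over $R/\Ann_R(N)$, which is Artinian, so $\Ext^i_R(M,N)$ has finite length for every $i$; in particular $\f(M,N) = 0 < \infty$. Also, by the inequalities in Section \ref{Notations}, $\cx_R(M,N) \leq \cx_R(M) = c$.

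I would split into two cases. If $c = 0$, then $M$ has finite projective dimension, so by the Auslander--Buchsbaum formula $\pd_R(M) = \depth(R) - \depth(M)$ and $\Ext^i_R(M,N) = 0$ for all $i > \depth(R) - \depth(M)$, making the conclusion automatic with no vanishing hypothesis needed. This case is essentially a restatement of \cite[4.2]{ArY} applied to the finite projective dimension setting.

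The main case is $c \geq 1$. Take $e = c$; then $e \geq \max\{1, \cx_R(M)\}$, so Proposition \ref{alfa5} supplies $\h^R_c(M,N) = 0$. Meanwhile, $e = c \geq \cx_R(M,N)$ and $\f(M,N) < \infty$ as noted above. The standing hypothesis of the corollary, namely $\Ext^n_R(M,N) = \cdots = \Ext^{n+c-1}_R(M,N) = 0$ for some $n > \depth(R) - \depth(M)$, is exactly the $e = c$ instance of the vanishing assumption required in Theorem \ref{alfa2}. Invoking that theorem then yields $\Ext^i_R(M,N) = 0$ for all $i > \depth(R) - \depth(M)$, which is the desired conclusion.

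There is no real obstacle here; the proof is essentially a bookkeeping exercise that assembles Proposition \ref{alfa5} (to certify the vanishing of the generalized Herbrand function) and Theorem \ref{alfa2} (to convert that vanishing, together with $c$ consecutive vanishing Ext modules, into eventual vanishing). The only subtlety worth mentioning explicitly is why $\f(M,N) < \infty$, which is immediate from $\len_R(N) < \infty$ and hence does not require any appeal to the inductive argument on $\dim(R)$ used in the proof of Corollary \ref{alfa3}.
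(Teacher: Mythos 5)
Your proposal is correct and matches the paper's argument, which likewise deduces the corollary immediately from Theorem \ref{alfa2} (with $e=c$) and Proposition \ref{alfa5}; your explicit checks that $\f(M,N)<\infty$ via $\len_R(N)<\infty$, that $\cx_R(M,N)\leq c$, and the separate trivial case $c=0$ are exactly the routine details the paper leaves implicit.
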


\begin{prop}\label{alfa7} Let $(R, \mathfrak m)$ be a local ring, and let $M$ and $N$ be finitely
generated $R$-modules. Assume the following conditions hold:
\begin{enumerate}
\item $\CI_{R}(M)<\infty$.
\item $\pd_{R_{p}}(M_{p})<\infty$ for all $p \in \textnormal{Spec}(R)-\{\mathfrak m\}$.
\item $[N]=0$ in $\overline{G}(R)_{\QQ}$.
\end{enumerate}
Set $c=\cx_{R}(M)$. If $\Ext^{n}_{R}(M,N)=\dots=\Ext^{n+c-1}_{R}(M,N)=0$
for some $n>\depth(R)-\depth(M)$, then $\Ext^{i}_{R}(M,N)=0$ for all $i>\depth(R)-\depth(M)$.
\end{prop}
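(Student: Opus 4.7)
The plan is to reduce the statement to Theorem \ref{alfa2} by proving $\h_c^R(M,N) = 0$. The case $c = 0$ is immediate: $\cx_R(M) = 0$ forces $\pd_R M < \infty$, and Auslander--Buchsbaum then gives $\Ext^i_R(M,N) = 0$ for all $i > \depth(R) - \depth(M)$. So I assume $c \geq 1$.

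By hypothesis (2), for every finitely generated $R$-module $X$ and every $i \gg 0$ the module $\Ext^i_R(M,X)$ vanishes at each non-maximal prime, hence has finite length; thus $\f(M,X) < \infty$. Since $\cx_R(M,X) \leq \cx_R(M) = c$, Remark \ref{alfa1} shows $\h_c^R(M,X)$ is finite and well defined. As in that remark, I choose a quasi-deformation $R \to S \twoheadleftarrow P$ with Artinian closed fiber such that $\pd_P(M \otimes_R S) < \infty$; then $\h_c^R(M,X)$ is a fixed rational multiple of $\h_c^S(M \otimes_R S,\, X \otimes_R S)$. Flatness of $R \to S$ preserves short exact sequences, and Theorem \ref{beta2}(2)(ii) (applied over $S = P/(\underline f)$ with $e = c \geq 1$) gives biadditivity in the second argument. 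Hence $X \mapsto \h_c^R(M,X)$ is additive on short exact sequences of finitely generated $R$-modules and therefore descends to a group homomorphism $\varphi : G(R) \to \QQ$.

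The next step is to show $\varphi([R]) = 0$, so that $\varphi$ descends to $\overline{G}(R)_\QQ$. Because $\CI_R(M) < \infty$, one has finite Gorenstein dimension with $\textnormal{G-dim}_R(M) \leq \CI_R(M) = \depth(R) - \depth(M)$, and Auslander--Bridger then gives $\Ext^i_R(M,R) = 0$ for every $i > \depth(R) - \depth(M)$. In particular $\cx_R(M,R) = 0 < c$, so Theorem \ref{beta2}(1) (applied over $S$) forces $\h_c^R(M,R) = 0$. Extending scalars to $\QQ$, we obtain a well-defined $\QQ$-linear map $\overline{G}(R)_\QQ \to \QQ$. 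Hypothesis (3) then yields $\h_c^R(M,N) = \varphi([N]) = 0$, and Theorem \ref{alfa2} with $e = c$ delivers the conclusion.

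The main obstacle is the vanishing $\h_c^R(M,R) = 0$. This rests on the inequality $\textnormal{G-dim}_R(M) \leq \CI_R(M)$, which is standard but is not developed explicitly in the paper; alternatively one argues directly over the quasi-deformation, observing that $\pd_P(M \otimes_R S) < \infty$ forces $\Ext^i_S(M \otimes_R S,\, S) = 0$ for $i \gg 0$, whence $\Ext^i_R(M,R) \otimes_R S = 0$ by flat base change, giving $\cx_R(M,R) = 0$. A minor but necessary bookkeeping point is that the biadditivity in Theorem \ref{beta2}(2)(ii) is stated only for $R$ of the form $Q/(\underline f)$; this is handled uniformly by reducing the computation of $\h_c^R$ to $\h_c^S$ via the quasi-deformation, which is exactly the mechanism already used in Remark \ref{alfa1}.
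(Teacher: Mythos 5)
Your proposal is correct and follows essentially the same route as the paper: treat $\h^{R}_{c}(M,-)$ as a map on $G(R)$ that is additive by Theorem \ref{beta2}(2), note that $\CI_{R}(M)<\infty$ forces $\Ext^{i}_{R}(M,R)=0$ for $i\gg 0$ so the map kills $[R]$ and descends to $\overline{G}(R)_{\QQ}$, deduce $\h^{R}_{c}(M,N)=0$ from hypothesis (3), and finish with Theorem \ref{alfa2}. Your extra care in routing the finiteness and biadditivity through the quasi-deformation of Remark \ref{alfa1} only makes explicit what the paper's proof uses implicitly.
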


\begin{proof} There is nothing to prove if $c=0$. So we may assume $c\geq 1$. Let $X$ be a finitely generated $R$-module. As $\pd_{R_{p}}(M_{p})<\infty$ for all $p \in \textnormal{Spec}(R)-\{\mathfrak m\}$, $\f(M,X)<\infty$. Hence Theorem \ref{beta2}(1)  shows that $\h^{R}_{c}(M,X)$ is finite. Therefore $\h^{R}_{c}(M,-):G(R)\rightarrow \QQ$ defines a linear map by Theorem \ref{beta2}(2). Note that, since $\CI_{R}(M)<\infty$, $\Ext^{i}_{R}(M,R)=0$ for all $i\gg 0$ (cf. \cite[1.4]{AGP} and \cite[ch.3]{AuBr}; see also \cite[1.2.7]{Chr}). Thus one obtains an induced map $\h^{R}_{c}(M,-):\overline{G}(R)_{\QQ}\rightarrow \QQ$. This implies, since $[N]=0$ in $\overline{G}(R)_{\QQ}$, that $\h^{R}_{c}(M,N)=0$. The result now follows from Theorem \ref{alfa2}.
\end{proof}

The next two results follow from Proposition \ref{Groth} and Proposition \ref{alfa7}. They improve \cite[2.6(1)]{Jo2} for finitely generated modules over certain local rings (see also Remark \ref{Grothrmk} for Corollary \ref{cor1}).

\begin{cor}\label{cor1} Let $R$ be a two-dimensional local normal domain such that the class group of $R$ is torsion. Let $M$ and $N$ be finitely generated $R$-modules. Assume that $\CI_{R}(M)<\infty$. Set $c=\cx_{R}(M)$. If $\Ext^{n}_{R}(M,N)=\dots=\Ext^{n+c-1}_{R}(M,N)=0$ for some $n> 2-\depth(M)$, then $\Ext^{i}_{R}(M,N)=0$ for all $i> 2-\depth(M)$.
\end{cor}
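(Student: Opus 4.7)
The plan is to derive Corollary \ref{cor1} as a direct application of Proposition \ref{alfa7}, once we verify its three hypotheses in this setting. The first hypothesis, $\CI_{R}(M)<\infty$, is part of the corollary's assumption. The third hypothesis, that $[N]=0$ in $\overline{G}(R)_{\QQ}$, is exactly the content of Proposition \ref{Groth}(4) since $R$ is a two-dimensional normal domain with torsion class group. So the only substantive step is to check that $\pd_{R_{p}}(M_{p})<\infty$ for every non-maximal prime $p\in \Spec(R)$.

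For this I would use the normality of $R$. If $p$ is the zero ideal, then $R_{p}$ is the field of fractions of $R$, so $M_{p}$ is free. If $p$ has height one, then Serre's criterion (the $R_{1}$ condition, which is automatic for a normal ring) ensures $R_{p}$ is a regular local ring of dimension one; hence every finitely generated $R_{p}$-module has finite projective dimension. Since $\dim(R)=2$, these exhaust the non-maximal primes, so condition (2) of Proposition \ref{alfa7} holds.

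I would also point out that the bound $n>2-\depth(M)$ in the statement is exactly $n>\depth(R)-\depth(M)$ appearing in Proposition \ref{alfa7}, because the two-dimensional normal domain $R$ satisfies Serre's condition $S_{2}$ and is therefore Cohen--Macaulay, giving $\depth(R)=\dim(R)=2$.

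With the three hypotheses of Proposition \ref{alfa7} verified, that proposition applies and yields $\Ext^{i}_{R}(M,N)=0$ for all $i>\depth(R)-\depth(M)=2-\depth(M)$, which is precisely the conclusion. There is no real obstacle here; the work has already been carried out in Proposition \ref{alfa7} (where the generalized Herbrand function $\h^{R}_{c}(M,-)$ is shown to factor through $\overline{G}(R)_{\QQ}$) and in Proposition \ref{Groth}(4) (where $\overline{G}(R)_{\QQ}$ is computed). The only thing worth emphasizing in the write-up is the normality check that gives condition (2).
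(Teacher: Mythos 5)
Your proposal is correct and follows the paper's own route: the authors also obtain Corollary \ref{cor1} by combining Proposition \ref{Groth}(4) (which gives $[N]=0$ in $\overline{G}(R)_{\QQ}$) with Proposition \ref{alfa7}, and your verifications of hypothesis (2) via regularity in codimension one and of $\depth(R)=2$ via normality are exactly the routine checks left implicit there.
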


As discussed in Remark \ref{Grothrmk}, the condition that $\Cl(R)$ is torsion is subtle and may depend of the characteristic
of $R$. However  this Lemma below gives an easy way to find rings with torsion class groups. 

\begin{lem}
Let $R \subseteq S$ be a finite extension  of normal domains. If $\Cl(S)$ is torsion, then so is $\Cl(R)$.   
\end{lem}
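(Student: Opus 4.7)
The plan is to construct two maps between $\Cl(R)$ and $\Cl(S)$ whose composition on $\Cl(R)$ is multiplication by $n := [L : K]$, where $K = \mathrm{Frac}(R)$ and $L = \mathrm{Frac}(S)$. Granting this, if $[I] \in \Cl(R)$, the image $\iota^{*}[I] \in \Cl(S)$ has finite order $m$ by hypothesis, so applying the norm yields $mn \cdot [I] = 0$ in $\Cl(R)$; hence $\Cl(R)$ is torsion.

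First I would define the pullback $\iota^{*} : \Cl(R) \to \Cl(S)$ by $\iota^{*}[I] = [(IS)^{**}]$, i.e.\ the class of the divisorial hull of the extended ideal. This is a well-defined group homomorphism because principal ideals extend to principal ideals. Next I would construct the norm $N : \Cl(S) \to \Cl(R)$ at the level of divisors: since $R \subseteq S$ is integral, each height one prime $Q$ of $S$ contracts, by incomparability, to a height one prime $P := Q \cap R$ of $R$; set $N(Q) = [\kappa(Q) : \kappa(P)] \cdot P$ and extend $\mathbb{Z}$-linearly. That $N$ descends to class groups amounts to the classical identity $N(\mathrm{div}_{S}(\alpha)) = \mathrm{div}_{R}(N_{L/K}(\alpha))$ for $\alpha \in L^{\times}$, which localizes at each height one prime $P$ of $R$ to the formula $v_{P}(N_{L/K}(\alpha)) = \sum_{Q \cap R = P} [\kappa(Q) : \kappa(P)] \, v_{Q}(\alpha)$, valid because $R_{P}$ is a DVR and the finitely generated torsion-free $R_{P}$-module $S_{P}$ is free of rank $n$.

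The crucial composition identity $N \circ \iota^{*} = n \cdot \mathrm{id}$ is then local at each height one prime $P$ of $R$: writing $PS_{P} = \prod_{i} Q_{i}^{e_{i}} S_{P}$ with residue degrees $f_{i} = [\kappa(Q_{i}) : \kappa(P)]$, freeness of $S_{P}$ over $R_{P}$ gives $\ell_{R_{P}}(S_{P}/PS_{P}) = n$, and combined with the Chinese Remainder decomposition of $S_{P}/PS_{P}$ this forces the classical equality $\sum_{i} e_{i} f_{i} = n$. Therefore $N(\iota^{*}P) = \sum_{i} e_{i} f_{i} \cdot P = n \cdot P$, completing the setup and, with it, the lemma.

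The main subtlety is that we do not assume $L/K$ is Galois or even separable, so the norm must be handled purely at the divisorial level rather than via Galois-theoretic traces. Once one reduces to the local case of a DVR $R_{P}$ and the semi-local principal ideal ring $S_{P}$ that is free of rank $n$ over it, both the well-definedness of $N$ and the identity $\sum e_{i} f_{i} = n$ are classical, and the argument goes through uniformly regardless of separability.
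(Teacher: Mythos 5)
Your argument is correct and is essentially the same as the paper's: the paper simply cites Bourbaki (Commutative Algebra, Ch.~VII, \S 4.8) for the extension map $\Cl(R)\to\Cl(S)$ and the norm map $\Cl(S)\to\Cl(R)$ whose composite is multiplication by $n=[L:K]$, while you construct these maps and verify $N\circ\iota^{*}=n\cdot\id$ at height one primes before concluding exactly as the paper does. One minor attribution fix: the fact that a height one prime $Q$ of $S$ contracts to a height one prime of $R$ needs going-down (available since $R$ is normal) to get $\height(Q\cap R)\le \height(Q)$; incomparability alone only yields the reverse inequality.
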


\begin{proof}
Let $K,L$ be the quotient field of $R$ and $S$ respectively and $n = \lfloor L:K\rfloor $. There are well-known maps: $i: \Cl(R) \to \Cl(S)$ and $j: \Cl(S) \to \Cl(R)$ such  that $j\circ i = n\dot \id_{\Cl(R)}$, see Chapter 7, Section 4.8 of \cite{Bour}. Let $\alpha$ be an element of $\Cl(R)$. Since $\Cl(S)$ is torsion, there is an integer $n_1$ such that $n_1i(\alpha)=0$ in $\Cl(S)$. Then it follows that $nn_1\alpha= 0$ in $\Cl(R)$. 
\end{proof}

\begin{eg}
The above Lemma shows, for example, that if $R$ is a Veronese subring of the polynomial rings $S=k[x,y]$, then $\Cl(R)$ is torsion. So local rings of $R$ would satisfy the condition of Corollary \ref{cor1}. One can use the Lemma repeatedly to generate similar examples.  
\end{eg}

\begin{cor}\label{alfa8} Let $R$ be a one-dimensional local domain, and let $M$ and $N$ be finitely generated $R$-modules. Assume $\CI_{R}(M)<\infty$. Set $c=\cx_{R}(M)$. If $\Ext^{n}_{R}(M,N)=\dots=\Ext^{n+c-1}_{R}(M,N)=0$
for some $n>1-\depth(M)$, then $\Ext^{i}_{R}(M,N)=0$ for all $i>1-\depth(M)$.
\end{cor}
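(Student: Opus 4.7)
The plan is to apply Proposition \ref{alfa7} directly, so the task reduces to verifying its three hypotheses in the setting of a one-dimensional local domain.

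First I would observe that since $R$ is a one-dimensional domain, every nonzero element of the maximal ideal $\mathfrak{m}$ is a nonzerodivisor, so $\depth(R)\geq 1$; combined with $\depth(R)\leq \dim(R)=1$, this gives $\depth(R)=1$. Hence the numerical condition $n>1-\depth(M)$ in the hypothesis coincides with $n>\depth(R)-\depth(M)$, and the same translation applies to the conclusion. This is the cosmetic step that matches the two statements.

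Next I would verify condition (2) of Proposition \ref{alfa7}. Since $\dim(R)=1$ and $R$ is a domain, the only non-maximal prime of $R$ is $(0)$. The localization $R_{(0)}$ is the field of fractions of $R$, over which every module is free; in particular $\pd_{R_{(0)}}(M_{(0)})=0<\infty$. Condition (1) is given by hypothesis. Condition (3), namely $[N]=0$ in $\overline{G}(R)_{\QQ}$, follows at once from part (3) of Proposition \ref{Groth}.

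With all three hypotheses in place, Proposition \ref{alfa7} delivers the conclusion: if $\Ext^n_R(M,N)=\cdots=\Ext^{n+c-1}_R(M,N)=0$ for some $n>\depth(R)-\depth(M)=1-\depth(M)$, then $\Ext^i_R(M,N)=0$ for every $i>1-\depth(M)$. There is no real obstacle here; the corollary is essentially a packaging of Proposition \ref{alfa7} against the Grothendieck-group computation of Proposition \ref{Groth}(3), and the only thing one must be slightly careful about is matching the depth normalization between the two statements.
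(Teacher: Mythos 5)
Your proof is correct and follows exactly the route the paper intends: the authors state that Corollary \ref{alfa8} follows from Proposition \ref{Groth} and Proposition \ref{alfa7}, and your verification of the three hypotheses (in particular that the only non-maximal prime is $(0)$, over whose localization every module is free, and that $\depth(R)=1$ so the depth normalizations agree) is precisely the routine checking they leave implicit.
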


When the ring considered is Gorenstein, one can improve Corollary \ref{alfa8} by using the fact that every finitely generated module over a Gorenstein ring has a maximal Cohen-Macaulay approximation \cite{AuBu} (cf. also \cite[chapter 9]{LW}).

\begin{prop}\label{alfa9} Let $R$ be a one-dimensional local Gorenstein domain, and let $M$ and $N$ be finitely generated $R$-modules. Assume $\CI_{R}(M)<\infty$ and $\Ext^{1}_{R}(M,N)=\dots=\Ext^{c}_{R}(M,N)=0$, where $c=\cx_{R}(M)\geq 1$. Then the following holds:
\begin{enumerate}
\item $M$ is torsion-free and $\Ext^{i}_{R}(M,N)=0$ for all $i\geq 1$.
\item $N$ is torsion-free if and only if $\Ext^{1}_{R}(N,M)=0$ if and only if $\Ext^{i}_{R}(N,M)=0$ for all $i\geq 1$.
\end{enumerate}
\end{prop}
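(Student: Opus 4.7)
My plan for Part (1) is to exploit the fact that $R$ is Gorenstein to take a maximal Cohen-Macaulay approximation of $N$, and then reduce to Corollary \ref{alfa8}. Concretely, if one shows $M$ is torsion-free, then $\depth(M) = 1$, the starting index $n = 1$ satisfies $n > 0 = \depth(R) - \depth(M)$, and the $c$ consecutive vanishings in the hypothesis force $\Ext^i_R(M, N) = 0$ for all $i \geq 1$ via Corollary \ref{alfa8}. So the crux is to establish that $M$ is torsion-free.

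The Auslander-Buchweitz theorem supplies a short exact sequence $0 \to Y \to X \to N \to 0$ with $X$ maximal Cohen-Macaulay and $\pd_R(Y) < \infty$, which since $R$ is one-dimensional Gorenstein means $\pd_R(Y) \leq 1$ and equivalently $\id_R(Y) \leq 1$. Thus $\Ext^i_R(M, Y) = 0$ for $i \geq 2$. The long exact sequence of $\Hom_R(M, -)$ gives $\Ext^i_R(M, X) \cong \Ext^i_R(M, N) = 0$ for $2 \leq i \leq c$, and additionally $\Ext^1_R(M, X)$ sits as a quotient of $\Ext^1_R(M, Y)$. Since $X$ is maximal Cohen-Macaulay (hence torsion-free over our one-dimensional Gorenstein domain), Proposition \ref{Groth}(3) gives $[X] = 0$ in $\overline{G}(R)_{\QQ}$, and the argument of Proposition \ref{alfa7} then yields $\h^R_c(M, X) = 0$.

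The main obstacle is bridging the gap between the $c-1$ consecutive vanishings of $\Ext^i_R(M, X)$ and the $c$ consecutive vanishings needed by Theorem \ref{alfa2}. I propose to handle this by additionally controlling $\Ext^1_R(M, X)$ through the detailed structure of $Y$ (which, over a one-dimensional Gorenstein ring, admits a resolution of length at most one by free modules), combined with the finite complete intersection dimension of $M$. Once vanishing for the full range $i = 1, \dots, c$ is established, Theorem \ref{alfa2} applied to $(M, X)$ produces $\Ext^i_R(M, X) = 0$ for all $i \geq 1$, and transferring this back via the long exact sequence for the MCM approximation will force $\depth(M) = 1$, and hence $M$ is torsion-free.

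For Part (2), Part (1) has given us that $M$ is maximal Cohen-Macaulay (hence of Gorenstein dimension zero, with $\Ext^i_R(M, R) = 0$ for $i \geq 1$). The three conditions are equivalent via the cycle $[\Ext^i_R(N,M) = 0 \text{ for all } i \geq 1] \Rightarrow [\Ext^1_R(N, M) = 0] \Rightarrow [N \text{ torsion-free}] \Rightarrow [\Ext^i_R(N, M) = 0 \text{ for all } i \geq 1]$. The first implication is trivial; the second uses MCM approximation of $N$: if $tN \neq 0$, then the short exact sequence $0 \to tN \to N \to \overline{N} \to 0$ combined with local duality over the Gorenstein ring $R$ (using that $\depth(M) = 1$ and $tN$ has finite length) produces a nonzero $\Ext^1_R(tN, M)$, contradicting $\Ext^1_R(N, M) = 0$. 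The third implication follows from Ext symmetry for pairs of maximal Cohen-Macaulay modules over a Gorenstein ring when one has finite complete intersection dimension, allowing the vanishing from Part (1) to be transferred between $\Ext^*_R(M, N)$ and $\Ext^*_R(N, M)$.
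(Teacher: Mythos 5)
Your Part (1) has a genuine gap exactly where you flag it. By approximating $N$ you only obtain $\Ext^i_R(M,X)\cong\Ext^i_R(M,N)=0$ for $2\le i\le c$, plus the statement that $\Ext^1_R(M,X)$ is a quotient of $\Ext^1_R(M,Y)$ -- and there is no argument offered (and none readily available) showing $\Ext^1_R(M,Y)$, hence $\Ext^1_R(M,X)$, vanishes; "controlling it through the detailed structure of $Y$" is a hope, not a proof, and without that $c$-th vanishing neither Theorem \ref{alfa2} nor Corollary \ref{alfa8} applies. The paper avoids this by approximating $M$ itself in the problematic case: assuming $\depth(M)=0$, it takes the Auslander--Buchweitz sequence $0\to T\to X\to M\to 0$ with $X$ maximal Cohen--Macaulay and $T$ of finite injective dimension, and observes that over a one-dimensional Gorenstein ring the depth lemma forces $T$ to be \emph{free}. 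Because the kernel term is free, the full range of vanishing $\Ext^1_R(X,N)=\dots=\Ext^c_R(X,N)=0$ transfers with no loss, $\CI_R(X)<\infty$ and $\cx_R(X)=\cx_R(M)$ follow, Corollary \ref{alfa8} gives $\Ext^i_R(X,N)=0$ for all $i\ge1$, hence $\Ext^i_R(M,N)=0$ for all $i\ge1$, which contradicts the nonvanishing $\Ext^1_R(M,N)\neq0$ forced by $\depth(M)=0$ (Araya--Yoshino). That contradiction, not a direct construction, is what proves $M$ torsion-free; your plan never supplies the analogous mechanism.

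Part (2) also has two unsupported steps. For $[\Ext^1_R(N,M)=0]\Rightarrow[N\text{ torsion-free}]$, the long exact sequence from $0\to tN\to N\to \overline{N}\to 0$ only yields an injection $\Ext^1_R(tN,M)\hookrightarrow\Ext^2_R(\overline{N},M)$ once $\Ext^1_R(N,M)=0$; since $\Ext^2_R(\overline{N},M)$ is not controlled (neither $\overline{N}$ nor $M$ has finite projective or injective dimension in general), no contradiction with $\Ext^1_R(tN,M)\neq0$ follows. For $[N\text{ torsion-free}]\Rightarrow[\Ext^i_R(N,M)=0\ \forall i\ge1]$, the "Ext symmetry for MCM modules over a Gorenstein ring when one module has finite CI-dimension" is not an available black box in this generality (such symmetry is known over complete intersections or AB rings, not for an arbitrary Gorenstein $R$ with only $\CI_R(M)<\infty$), and even granted asymptotically it would still need an extra step to reach all $i\ge1$. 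The paper's route is different: from $\Ext^i_R(M,N)=0$ for all $i\ge1$ and $\CI_R(M)<\infty$ it gets vanishing of all stable cohomology $\widehat{\Ext}^n_R(M,N)$, converts this to $\widehat{\Tor}^R_n(M^{\ast},N)=0$ using $M\cong M^{\ast\ast}$, invokes the Christensen--Jorgensen depth formula to get $\depth(M^{\ast}\otimes_RN)=\depth(N)$, and then closes the cycle with Huneke--Wiegand (torsion-freeness of $M^{\ast}\otimes_RN$ from $\Ext^1_R(N,M)=0$) and Jorgensen's result for the remaining implication.
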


\begin{proof} We will first prove that $M$ is torsion-free and $\Ext^{i}_{R}(M,N)=0$ for all $i\geq 1$. If $M$ is torsion-free, then the result follows from Corollary \ref{alfa8}.
Suppose now $M$ has torsion, i.e., $\depth(M)=0$. Then there exists an exact sequence
$$(\ref{alfa9}.1)\;\; 0 \rightarrow T \rightarrow X \rightarrow M \rightarrow 0$$ where $X$ is torsion-free and
$T$ has finite injective dimension \cite{AuBu}. Since $R$ is a one-dimensional Gorenstein ring and $\depth(M)=0$,
the depth lemma and $(\ref{alfa9}.1)$ imply that $T$ is free. Furthermore, by $(\ref{alfa9}.1)$, $\Ext^{1}_{R}(X,N)=\dots=\Ext^{c}_{R}(X,N)=0$ and $\Ext^{i}_{R}(M,N) \cong \Ext^{i}_{R}(X,N)$ for all $i\geq 2$.
Since $\CI_{R}(M)<\infty$, there exists a quasi-deformation $R \rightarrow S \twoheadleftarrow P$ such that $\pd_{P}(M\otimes_{R}S)<\infty$. Tensoring $(\ref{alfa9}.1)$ with $S$ over $R$, we see that $\pd_{P}(X\otimes_{R}S)<\infty$. Hence $\CI_{R}(X)<\infty$. Since $\cx_{R}(M)=\cx_{R}(X)$, it now follows from Corollary \ref{alfa8} that $\Ext^{i}_{R}(X,N)=0$ for all $i\geq 1$. Thus $\Ext^{i}_{R}(M,N)=0$ for all $i\geq 1$. However, since $\depth(M)=0$,
$\Ext^{1}_{R}(M,N)\neq 0$ (cf. \cite[4.2]{ArY} or \cite[4.8]{AvBu}). Therefore $M$ is torsion-free and $\Ext^{i}_{R}(M,N)=0$ for all $i\geq 1$. This proves (1).

Note that, since $\CI_{R}(M)<\infty$ and $\Ext^{i}_{R}(M,N)=0$ for all $i\geq 1$, \cite[4.7]{AvBu} shows that $\widehat{\Ext}^{n}_{R}(M,N)=0$ for all $n \in \ZZ$, where $\widehat{\Ext}^{n}_{R}(M,N)$ denotes the $n$th stable cohomology module. Therefore, as $M$ is torsion-free, $\widehat{\Tor}^{R}_{n}(M^{\ast},N)\cong \widehat{\Ext}^{-n-1}_{R}(M^{\ast\ast},N)\cong \widehat{\Ext}^{-n-1}_{R}(M,N)=0$ for all $n \in \ZZ$. In particular $\Tor^{R}_{i}(M^{\ast},N)=0$ for all $i\geq 1$ (cf. \cite[4.4.6 \& 4.4.7]{AvBu}). It was proved in \cite{CJ} that if $X$ and $Y$ are finitely generated modules over a local Gorenstein ring $A$ such that $\widehat{\Tor}^{A}_{n}(X,Y)=0$ for all $n \in \ZZ$ and $\Tor^{A}_{i}(X,Y)=0$ for all $i\geq 1$, then the depth formula holds, i.e., $\depth_{A}(X\otimes_{A}Y)=\depth_A(X)+\depth_A(Y)-\depth(A)$. This implies $\depth(M^{\ast}\otimes_{R}N)=\depth(N)$.
Thus $N$ is torsion-free if and only if $M^{\ast}\otimes_{R}N$ is torsion-free. Now, if $N$ is torsion-free, then it follows from
\cite[2.7]{Jo4} that $\Ext^{i}_{R}(N,M)=0$ for all $i\geq 1$. On the other hand, if $\Ext^{1}_{R}(N,M)=0$, then \cite[4.6]{HW} implies that $M^{\ast}\otimes_{R}N$ is torsion-free. This proves (2).
\end{proof}

As the vanishing of $\Ext^{i}_{R}(M,N)$ for all $i\gg 0$ over a hypersurface $R$ forces $M$ or $N$ to have finite projective dimension \cite[5.12]{AvBu}, Corollary \ref{alfa8} and Proposition \ref{alfa9} yield the following result:

\begin{cor}\label{alfa10} Let $R$ be a one-dimensional local hypersurface domain, and let $M$ and $N$ be finitely generated $R$-modules. If $\Ext^{n}_{R}(M,N)=0$ for some $n\geq 1$, then $\pd_{R}M<\infty$ or $\pd_{R}N <\infty$. In particular, if $\Ext^{1}_{R}(M,M)=0$, then $M$ is free.
\end{cor}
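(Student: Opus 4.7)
The plan is to reduce the problem to a situation where Corollary \ref{alfa8} delivers eventual vanishing of $\Ext_R^i(M,N)$, and then invoke the Avramov--Buchweitz rigidity theorem \cite[5.12]{AvBu}, which says that over a hypersurface, eventual vanishing of $\Ext_R^i(M,N)$ forces $\pd_R M < \infty$ or $\pd_R N < \infty$. To begin, I would assume $\pd_R M = \infty$ and aim to conclude $\pd_R N < \infty$. Since $R$ is a one-dimensional hypersurface, $\CI_R(M) < \infty$ and $\cx_R(M) \leq 1$; combined with $\pd_R M = \infty$, this forces $\cx_R(M) = 1$.

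Next I would reduce to the torsion-free case. If $M$ itself is torsion-free, then $\depth M = 1$, so the hypothesis $n \geq 1$ satisfies $n > 1 - \depth M = 0$, and Corollary \ref{alfa8} with $c = 1$ yields $\Ext_R^i(M,N) = 0$ for all $i \geq 1$. If $M$ has torsion and $n = 1$, Proposition \ref{alfa9}(1) would force $M$ to be torsion-free, a contradiction, so this case does not arise. If $M$ has torsion and $n \geq 2$, I would pass to the first syzygy $L = \syz_R^1(M)$, which is torsion-free (being a submodule of a free module over the domain $R$), still satisfies $\CI_R(L) < \infty$ and $\cx_R(L) = 1$, and satisfies $\Ext_R^{n-1}(L,N) \cong \Ext_R^n(M,N) = 0$ with $n - 1 \geq 1$. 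The torsion-free case then gives $\Ext_R^i(L,N) = 0$ for all $i \geq 1$, whence $\Ext_R^i(M,N) = 0$ for all $i \geq 2$. Either way \cite[5.12]{AvBu} applies, and the first conclusion follows.

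For the \emph{in particular} statement I would apply the first conclusion with $N = M$ to obtain $\pd_R M < \infty$, and then $\pd_R M \leq \depth(R) = 1$ by Auslander--Buchsbaum. To rule out $\pd_R M = 1$, consider the minimal free resolution $0 \to R^a \to R^b \to M \to 0$ with $a \geq 1$. Applying $\Hom_R(-,M)$ produces the exact sequence $M^b \to M^a \to \Ext_R^1(M,M) \to 0$, and minimality forces the first map to factor through $\mathfrak{m} M^a$; hence $\Ext_R^1(M,M)$ surjects onto $M^a / \mathfrak{m} M^a \neq 0$, contradicting the hypothesis $\Ext_R^1(M,M) = 0$. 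Therefore $\pd_R M = 0$, i.e., $M$ is free.

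The main technical obstacle is handling $M$ with torsion, where Corollary \ref{alfa8} does not directly apply at $n = 1$ since the range $i > 1 - \depth(M) = 1$ excludes the first Ext. The syzygy trick is a standard remedy, but it must be paired with the separate treatment of $n = 1$ via Proposition \ref{alfa9}(1) to close all cases; once that bookkeeping is arranged, the argument is essentially mechanical, with the final freeness statement a short consequence of minimality of the free resolution.
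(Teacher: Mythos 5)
Your proposal is correct and follows essentially the same route the paper intends: Corollary \ref{alfa8} (torsion-free/$n$ large case), Proposition \ref{alfa9}(1) ($n=1$, depth zero case), and \cite[5.12]{AvBu} to conclude, with the freeness statement settled by Auslander--Buchsbaum plus minimality of the resolution. The only inessential detour is the syzygy step when $M$ has torsion and $n\geq 2$: there Corollary \ref{alfa8} already applies directly, since $n\geq 2>1=1-\depth(M)$.
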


\begin{rmk} We note that the conclusion of Corollary \ref{alfa10} is not true over an arbitrary hypersurface; see, for example, \cite[4.3]{AvBu}. We shall also note if $M$ is a finitely generated torsion-free module over a one-dimensional complete intersection domain $R$, it is not known whether $\Ext^{1}_{R}(M,M)=0$ forces $M$ to be free (cf. \cite[page 473]{HW}).
For modules of bounded Betti numbers, we have the following result (cf. also \cite[4.17]{Ce} and \cite[5.5]{Da1}).
\end{rmk}

\begin{prop}\label{alfa11} Let $R$ be a local ring, and let $M$ be finitely generated $R$-module. Assume $\CI_{R}(M)<\infty$ and $M$ has bounded Betti numbers. Assume further that $[M]=0$ in $\overline{G}(R)_{\QQ}$. If $\Ext^{n}_{R}(M,M)=0$ for some $n>\depth(R)-\depth(M)$, then $\pd_{R}(M)<n$.
\end{prop}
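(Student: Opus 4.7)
The plan is to derive a contradiction from the assumption $\pd_R M = \infty$; the stated bound $\pd_R M < n$ then follows immediately from the Auslander--Buchsbaum formula $\pd_R M = \depth(R) - \depth(M)$ together with the hypothesis $n > \depth(R) - \depth(M)$.

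So suppose $\pd_R M = \infty$. Since $M$ has bounded Betti numbers and $\CI_R(M) < \infty$, this forces $\cx_R(M) = 1$, and hence $\cx_R(M, M) \leq 1$. I first claim $\cx_R(M, M) = 1$: otherwise $\Ext^i_R(M, M) = 0$ for all $i \gg 0$, and \cite[Theorem 4.2]{AvBu} --- applied after the standard faithfully flat quasi-deformation reduction of Remark \ref{alfa1} --- then forces $\pd_R M < \infty$, a contradiction.

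The main step is to show $\h^R_1(M, M) = 0$. Mirroring the strategy of Proposition \ref{alfa7}, I would use the biadditivity from Theorem \ref{beta2}(2)(ii) to upgrade $\h^R_1(M, -)$ to a linear functional on $G(R)_{\QQ}$; the fact that $\CI_R(M) < \infty$ yields $\Ext^i_R(M, R) = 0$ for all $i \gg 0$ (cf.\ \cite[1.4]{AGP}), so $\cx_R(M,R)=0$ and $\h^R_1(M, R) = 0$, which lets this functional descend to $\overline{G}(R)_{\QQ}$. The hypothesis $[M] = 0$ in $\overline{G}(R)_{\QQ}$ then yields $\h^R_1(M, M) = 0$. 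The chief obstacle is ensuring that $\h^R_1(M, N)$ is finite (equivalently $\f(M, N) < \infty$) for the modules $N$ appearing in the short exact sequences that realize the relation $[M] \in \QQ \cdot [R]$ in $G(R)_{\QQ}$. I would handle this by an induction on $\dim R$ in the spirit of Corollary \ref{alfa3}, first replacing $M$ by a sufficiently high syzygy so that the descended hypotheses at each non-maximal prime $p$ (in particular the inequality $n > \depth R_p - \depth_{R_p} M_p$) become automatic, exploiting that $[\syz^k M] = \pm [M]$ in $\overline{G}(R)_{\QQ}$ modulo the class of $[R]$.

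With $\h^R_1(M, M) = 0$ and $\cx_R(M, M) = 1$ both established, Theorem \ref{alfa2} applied with $e = 1$ and the single vanishing $\Ext^n_R(M, M) = 0$ for $n > \depth(R) - \depth(M)$ gives $\Ext^i_R(M, M) = 0$ for all $i > \depth(R) - \depth(M)$. In particular $\cx_R(M, M) = 0$, contradicting the earlier claim, and closing the argument.
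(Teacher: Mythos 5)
Your argument is essentially the paper's: induct on $\dim R$ to get $\pd_{R_p}(M_p)<\infty$ at every non-maximal prime $p$, then run the Herbrand-function/Grothendieck-group argument (this is exactly Proposition \ref{alfa7} in the case $c=\cx_R(M)\leq 1$) and finish with \cite[4.2]{AvBu} and the Auslander--Buchsbaum formula; the paper does the same, merely citing Corollary \ref{alfa6} for the $\dim R=0$ case and Proposition \ref{alfa7} for the inductive step instead of re-deriving them. The one superfluous point is the syzygy replacement: the inequality $n>\depth R_p-\depth_{R_p}M_p$ is automatic since $\CI_{R_p}(M_p)\leq \CI_R(M)=\depth(R)-\depth(M)<n$ by \cite[1.6]{AGP}, and it is just as well, because the hypothesis $\Ext^n_R(M,M)=0$ would not transfer cleanly to $\Ext^n_R(\syz^k M,\syz^k M)$, so that step should simply be dropped.
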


\begin{proof} Notice that $\cx_{R}(M) \leq 1$. We proceed by induction on $\dim(R)$. Assume first $\dim(R)=0$. Then Corollary
\ref{alfa6} and \cite[4.2]{AvBu} imply that $\pd_{R}(M)<\infty$. Suppose now $\dim(R)\geq 1$. Then the induction hypothesis implies that $\pd_{R_{p}}(M_{p})<\infty$ for all non-maximal prime ideals $p$ of $R$. Hence Proposition \ref{alfa7} and \cite[4.2]{AvBu} give the desired result.
\end{proof}

\section*{Acknowledgments}
We would like to thank Luchezar Avramov, Dale Cutkosky, Mark Walker and Roger Wiegand for their valuable comments and suggestions during the preparation of this paper.

\end{document}